\tiny\color{gray},  
\newtheorem{theorem}{Theorem}[section]
\newtheorem{proposition}{Proposition}[section]
\newtheorem{definition}{Definition}[section]
\newtheorem{remark}{Remark}[section]
\newenvironment{keywords}{{\noindent\bf Keywords.}}{~}
\newenvironment{proof}{{\noindent\it Proof.}\quad}{\hfill $\square$\\}
\title{Cellular flow control design for mixing based on the least action principle}
\author{Weiwei Hu\thanks{Department of Mathematics, University of Georgia, Athens, GA 30602, USA  (Weiwei.Hu@uga.edu)}\quad\quad Ming-Jun Lai\thanks{Department of Mathematics, University of Georgia, Athens, GA 30602, USA   (mjlai@uga.edu)}\quad\quad Hao-Ning Wu\thanks{Department of Mathematics, University of Georgia, Athens, GA 30602, USA (hnwu@uga.edu)}}
\begin{document}
\maketitle

\begin{abstract}
We consider a novel  approach for the enhancement of  fluid mixing via pure stirring strategies building upon  the Least Action Principle (LAP) for incompressible flows. The LAP is formally analogous to the Benamou--Brenier formulation of optimal transport, but imposes an incompressibility constraint. Our objective is to find a velocity field, generated by Hamiltonian flows, that minimizes the kinetic energy while ensuring that the initial scalar distribution reaches a prescribed degree of mixedness by a finite time.  This formulation leads to  a ``point to  set"  type of optimization problem  which  relaxes the requirement on controllability of the system compared to the classic LAP framework. In particular, we assume that   the velocity field is induced by a finite set of cellular  flows that can be controlled in time.  We justify the feasibility of this constraint set and leverage Benamou--Brenier's results to establish the existence of a global optimal solution.  Finally,  we derive  the  corresponding  optimality conditions for solving the optimal time control and  conduct numerical experiments demonstrating  the effectiveness of our control design. 


\end{abstract}

\begin{keywords}
Fluid mixing, least action principle, optimal transport, cellular flows, point to set optimization 
\end{keywords}

\section{Introduction}

This work is concerned with  the bilinear control of  fluid mixing via two-dimensional cellular flows based on the Least Action Principle (LAP) for incompressible flows (e.g.~\cite{brenier1989least, brenier1993dual}). Fluid mixing, the process of dispersing one material or field in another medium, is fundamental to both natural phenomena and industrial applications, from atmospheric dynamics \cite{sherwood2014spread} to microfluidic devices \cite{stroock2002chaotic}. It is governed by two primary mechanisms: {\it advection}, the transport by a fluid's bulk motion, and {\it diffusion}, the spreading due to molecular motion. 
This study focuses on advection
dominant regimes, characterized by a high Péclet number, where rapid flow transport renders diffusion negligible. Such conditions are typical in systems with high velocities, large length scales, or significant turbulence, where advective time scales are markedly shorter than diffusive ones, and hence  effectively reduce the system dynamics to those of pure transport. 
In these regimes, mixing is governed solely by advection. We accordingly address the problem of optimal mixing of  through  stirring or active  control  of flow advection. In particular, the laminar advection  is at core of our investigation, where a deterministic flow is introduced.

Stirring strategies for purely advective systems have been extensively studied in the literature. As explored in the pioneering work \cite{aref1984stirring} by Aref,   mixing can be achieved by inducing chaotic advection in laminar flows  through  a simple and regular flow field. In other words, complex particle trajectories can be generated by deterministic velocity fields. This phenomenon provided a mechanism for efficient fluid stretching and folding without turbulence and it was formalized in the kinematic framework in \cite{ottino1989} by Ottino.  A common approach for optimal stirring is to formulate an optimization problem for  finding an optimal velocity field or a related protocol that  enhances mixing under constraints such as fixed energy, enstrophy, or action (e.g.~\cite{balasuriya2015dynamical, cortelezzi2008feasibility, couchman2010control, crippa2017cellular, eggl2020mixing, eggl2022mixing, franjione1992symmetry, gubanov2010towards, lin2011optimal, liu1994quantification, lunasin2012optimal, mathew2007optimal, sharma1997control, thiffeault2012using, vikhansky2002control, vikhansky2002enhancement}). 
Very often the velocity field is  induced by a finite set of forces or generated from a well-known reference mixer that can be controlled in time. This reflects the practical constraint that controlling flow fields arbitrarily in space is infeasible in real applications  
 (e.g., \cite{aref1984stirring, aref2002development, couchman2010control, d2002control,froyland2017optimal,  gubanov2010towards, gubanov2012cost, mathew2007optimal, ober2015multiobjective,paul2004handbook, rodrigo2003optimization, wang2003closed}). 
 Specific strategies include optimizing switching between prescribed flows (e.g.~\cite{cortelezzi2008feasibility,mathew2007optimal, ober2015multiobjective}), tuning parameters of ansatz vector fields \cite{gubanov2012cost,konishi2022fluid,mitchell2017designing}, and refining protocols for prescribed  mixers \cite{froyland2016optimal,froyland2017optimal, gubanov2010towards}, etc.
Further research incorporates   controlled flow dynamics into  the transport and mixing processes can be found in \cite{balogh2004optimal, eggl2022mixing, foures2014optimal, hu2018boundary, hu2020approximating, hu2023feedback, hu2018boundaryNS, hu2019approximating, koike2025relaxation, liu2008mixing}.
 
 Our current work  introduces a new idea for  control of fluid mixing, inspired by the LAP for incompressible flows.  
Specifically, we aim to find an optimal  flow map induced  by cellular flows that achieves a desired   mixing efficiency  with minimal  kinetic energy.

\subsection{Motivation from LAP for incompressible flows}

 The classic LAP seeks a smooth curve connecting two given points within the group of volume- and orientation-preserving diffeomorphisms of a compact domain of a Euclidean space,  while minimizing the kinetic action. Geometrically, this corresponds to finding a minimal geodesic path in this group as well addressed in the pioneering work of  Arnold \cite{arnold1966geometrie}. The Euler--Lagrange equation associated with its variational formulation  yields the Euler equations of incompressible flows  in Lagrangian form (e.g.~\cite{arnold1966geometrie}).
This problem is formally analogous to the Benamou--Brenier formulation of optimal transport \cite{benamou2000computational}, which expresses the Wasserstein (Monge--Kantorovich) distance  with quadratic cost as the minimization of kinetic action without an incompressibility constraint. 

However, when incompressibility and impermeability conditions are imposed, the resulting optimal flow is governed by  Euler equations with two-point boundary values in time, which is notoriously difficult to solve. In fact,  even the existence of solutions to this LAP is non-trivial, which necessitates  the framework of generalized flows introduced in \cite{brenier1993dual}. Our approach is built up the LAP for incompressible flows, and yet  introduces  a relaxed criterion that leverages the unique features  of mixing processes. In fluid mixing, the essential objective is the degree of mixedness, a quantity not uniquely determined by any single distribution, since  different configurations can exhibit the same level of mixedness. 

 To focus on our discussion, we consider an inhomogeneous passive scalar advected by an incompressible velocity field governed by the transport equation in  an open, bounded, and connected domain $\Omega \subset \mathbb{R}^2$. Assume that the boundary  of the domain, denoted by $\Gamma$, is sufficiently regular (which may include corners). 
Our objective is to find a velocity field generated by cellular flows that minimizes the kinetic energy while ensuring that the initial scalar distribution reaches a prescribed degree of mixedness by a finite time $t_f > 0$. The resulting optimal control problem is formulated as follows:
\begin{align}
\min_{v}\quad&\frac{1}{2} \int_0^{t_f} \int_{\Omega} \theta(x,t)|v(x,t)|^2 \,d x\,d t,  \label{cost}\\
\text{subject to}\quad&\partial_t\theta + v\cdot \nabla \theta = 0, \label{equ:transport}\\
& \nabla \cdot v=0\quad \text{with}\quad v\cdot n|_{\Gamma}=0,   \label{equ:v}\\
&\theta(0)=\theta_0, \quad  \text{and}  \label{ini}\\
&  \|\theta (t_f)\|_{m}
 \leq r \|\theta_0\|_{m}, \label{constr_final}
\end{align}
for some given parameter $0<r<1$, where $n$ is the unit outer normal vector to the domain boundary $\Gamma$  and $\|\cdot\|_{m}$ stands for the mix-norm for quantifying mixing. More details on the mix-norm will be discussed in Section \ref{sec:mix_norm}.

This formulation reframes the search for an optimal transport map from the initial state $\theta_0$ to some  final state $\theta(t_f)$ which satisfies a set constraint, as the problem of finding a time-dependent velocity field ${v}$ that advects $\theta_0$ to some  $\theta(t_f)$ under the governing system \eqref{equ:transport}--\eqref{constr_final}, while  minimizing  the total kinetic energy.   
    Achieving the  mixing efficiency is a more natural goal than reaching a precise state. This is exerted   by requiring that the mix-norm at final time is reduced to a specified fraction of its initial value. 
    Our formulation  essentially leads to  a ``point to  set"  type of optimization problem. This draws an essential difference from the classic LAP framework.   
Moreover, we shall employ cellular flows introduced in Section \ref{cellularflow} as mixing protocols to construct flow transport maps. This choice not only aligns with physically realizable control mechanisms but also circumvents the analytical challenges associated with infinite-dimensional control problems. 

We note that a recent work \cite{emerick2025incompressible} also develops a fluid mixing framework using dynamic optimal transport but with a prescribed final state. In contrast, our formulation and mixing criteria differ substantially from this setting.

\subsection{Mix-norm}\label{sec:mix_norm} 

A fundamental question in fluid mixing is how to quantify the degree of mixedness. A classical measure is the variance of the scalar concentration, which is equivalent to the $L^2$-norm of the scalar field \cite{danckwerts1952definition}. However, this measure is inadequate in the absence of diffusivity, as it fails to quantify the effects of pure stirring   (e.g.~\cite{mathew2005multiscale, thiffeault2012using}). In fact,
due to incompressibility  and no-penetration boundary  condition in \eqref{equ:v} it can be easily  verified  that any $L^p$-norm of $\theta$ is conserved, i.e.,
\begin{align}
\|\theta(t)\|_{L^p(\Omega)}=\|\theta_{0}\|_{L^p(\Omega)}, \quad t\geq 0, \quad p\in[1,\infty]. \label{theta_Lp}
\end{align}
Mathew, Mezić, and Petzold in \cite{mathew2005multiscale}  first introduced the mix-norm based on ergodic theory, which is sensitive to both stirring and diffusion. They also showed the equivalence of the mix-norm to the $H^{-1/2}$-norm for periodic boundary conditions. In fact, any negative Sobolev norm $H^{-s}$, for $s>0$, that quantifies the weak convergence can be used as a mix-norm   (e.g.~\cite{lin2011optimal, lunasin2012optimal}).
  Other measures such as the geometric mixing scale,  the measure of the interface of the mixtures, as well as entropy are employed to quantify mixing process and the related control problems (e.g.~\cite{alberti2016exponential, chakravarthy1996mixing, crippa2019polynomial, d2002control, elgindi2019universal, vikhansky2002control, vikhansky2002enhancement, yao2017mixing}). A more detailed review can be found in \cite{thiffeault2012using}.

 In this work, we consider a general open bounded and connected domain for the scalar field with no additional boundary conditions, where no-penetration is imposed on the velocity field.  The dual norm $\|\cdot\|_{(H^{s}(\Omega))'}, s>0,$ is employed to quantify mixing instead of the negative Sobolev norm, where $(H^{s}(\Omega))'$ is the dual space of $H^{s}(\Omega)$.  Without loss of generality, we adopt  $(H^{1}(\Omega))'$, to quantify mixing following our earlier \cite{hu2018boundary,hu2018boundaryNS,   hu2020approximating,  hu2019approximating}. 
 
To better interpret the dual norm, we consider the  elliptic boundary value problem:
\begin{equation}\label{equ:eta}
(-\Delta + I)\eta = \theta \quad \text{in } \Omega, \qquad \frac{\partial \eta}{\partial n}\big|_{\Gamma} = 0.
\end{equation}
The mix-norm is then defined as
\[
\|\theta\|_{(H^1(\Omega))'} = \|\eta\|_{H^1(\Omega)}.
\]
Let $\mathcal{A}=-\Delta+I$ with domain $D(\mathcal{A})=\{H^2(\Omega)\colon \frac{\partial \eta}{\partial n}|_{\Gamma}=0\}$.  Then
\begin{equation}\label{equ:eta2}
\|\theta\|^2_{(H^1(\Omega))'}=(\mathcal{A}^{-1}\theta, \theta).
\end{equation}

Further let $\bar{\theta}_0 := \frac{1}{|\Omega|} \int_{\Omega} \theta_0(x) dx$ denote the spatial average of the initial scalar field. Since the spatial average is conserved for all time, i.e,
\[ \frac{1}{|\Omega|}\int_{\Omega}\theta(x,t)dx=\bar{\theta}_0, \quad \forall t>0,
\] 
without loss of generality, we can replace the final time condition  \eqref{constr_final} as
 \begin{align}
      \|\theta(t_f)-\bar{\theta}_0\|^2_{(H^1(\Omega))^{\prime}}\leq r^2\left\|\theta_0-\bar{\theta}_0\right\|^2_{(H^1(\Omega))^{\prime}}, \quad 0<r<1.\label{2constr_final}
 \end{align}

\subsection{Cellular flows based finite dimensional control design}\label{cellularflow}

In this work, we employ  cellular flows  as   two-dimensional mixing protocols,  which 
play a fundamental role in mathematical analysis of fluid mixing  and  engineering applications (e.g.~\cite{bedrossian2017enhanced, zelati2024mixing,crippa2019polynomial, crippa2017cellular, elgindi2019universal, fannjiang1994convection, franjione1992symmetry, heinze2001diffusion, iyer2021convection, iyer2023quantifying, mathew2007optimal, novikov2005boundary, yao2017mixing}).
  Specifically, consider  that  the flow velocity in \eqref{equ:transport}  is induced by a set of finite basis flows in a two-dimensional  open unit square $\Omega=(0,1)^2$:
\begin{equation}
{v}(x,t) = \sum_{i=1}^N u_{i}(t)  b_{i}(x),\label{v_finite}
\end{equation}
where $b_i(x), i=1, 2,\dots, N,$   are the cellular flows generated by the  Hamiltonians $H_i(x_1,x_2)=\frac{1}{i\pi}\sin  (i \pi x_1)\sin (i\pi x_2)$
\begin{align}
    b_i=\nabla^{\perp}H_i(x_1,x_2)
= \begin{bmatrix}
      -\sin (i \pi x_1)\cos(i \pi x_2)          \\[0.3em]
        \cos (i \pi x_1)\sin(i \pi x_2)      
     \end{bmatrix},
\label{cellular_2D} 
\end{align} 
and the temporal coefficients $u_i(t), i=1,2,\dots,N,$ serve as the control inputs. 
It is clear that
\begin{equation}\label{equ:basisflows}
\nabla \cdot b_{i} = 0 \quad \text{and} \quad b_{i} \cdot n|_{\Gamma} = 0.
\end{equation} 
Fig. \ref{fig:displayonsquare} below displays cellular flows $b_i$ for $i=1,2,3,4$,  which comprise  $i^2$ cells. 

\begin{figure}[htbp]
\centering
\includegraphics[width =0.88\textwidth]{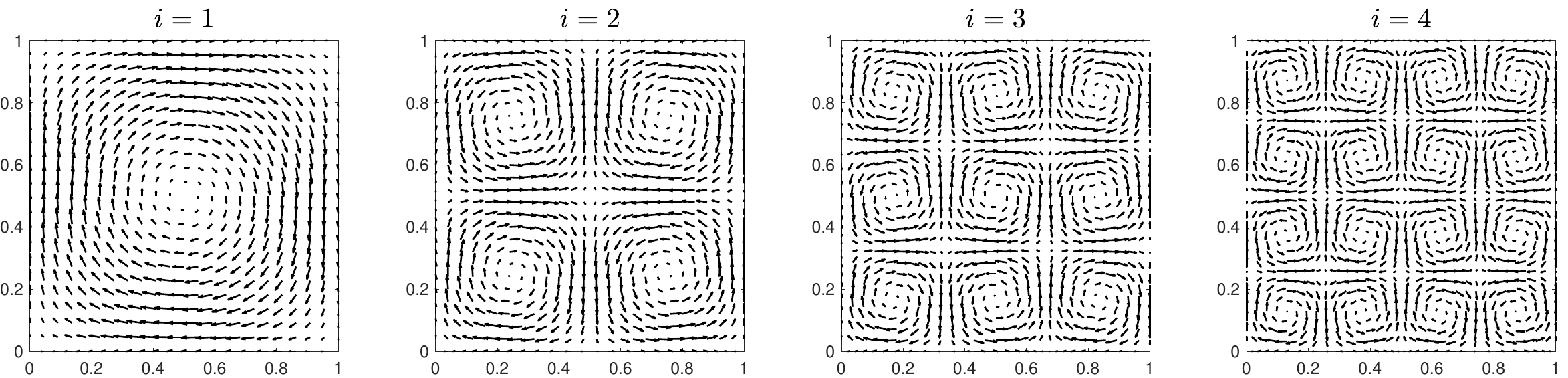}
\caption{Cellular flows on the square}\label{fig:displayonsquare}
\end{figure}  

Cellular flows generated by  Hamiltonians are incompressible, periodic flow patterns in a two-dimensional plane that consist of repeating lattices of swirling, closed-loop trajectories. 
 They are  especially valuable in regimes where turbulence is unattainable, such as at low Reynolds numbers, or where precise fluid manipulation is required. These protocols are widely employed across scales, from microfluidic devices to industrial-scale reactors, often implemented through designed system geometries (e.g., ridges, baffles, or electrodes) or time-dependent forcing strategies (e.g., flow reversal or rotating fields) that deterministically produce cellular flow structures.
For instance, staggered herringbone mixers use patterned grooves on microchannel walls to induce chaotic advection via asymmetric, staggered cellular flows, making them essential for efficient reagent blending in lab-on-a-chip diagnostics. On a larger scale, stirred tank reactors employ rotating impellers to generate intense trailing vortices (small-scale cellular flows) that homogenize contents in chemical and biochemical processes. A central aim of these designs is to transition from orderly cellular flows into chaotic advection, achieving exponential stretching of material lines and ultimately enabling highly efficient mixing.

Mathematically, the velocity field generated by cellular flows preserves the convexity advantages of the Benamou--Brenier framework. As a result,  any solution to the problem defined by \eqref{cost}--\eqref{ini} and \eqref{2constr_final} is  globally optimal.  However, proving the existence of such a solution is nontrivial, as it requires establishing the feasibility of the constraint set—a challenging task within a finite-dimensional control space.

Recent progresses in mathematical analysis of mixing enhancement by cellular flows (e.g.~\cite{brue2024enhanced,  zelati2024mixing,crippa2017cellular, elgindi2019universal, iyer2023quantifying, yao2017mixing})
 lay a theoretical foundation for addressing  the feasibility of our problem. The details will be  discussed  in Section \ref{sec:feasibility}.

\subsection{Organization}

The following sections detail the analysis and implementation of our control design. In the next section, we first establish the feasibility of the problem by showing that the constraint set defined by \eqref{equ:transport}--\eqref{ini} and \eqref{2constr_final} together with \eqref{v_finite}--\eqref{cellular_2D} is non-empty for $N$ large enough. We then show in Section \ref{sec:optimality_conditions} that   our formulation results in a convex optimization problem and presents the corresponding optimality conditions. The numerical implementation of our control design, including a solution algorithm, is discussed in Section \ref{sec:implementation}. Finally, Section \ref{sec:numerical_results} validates our approach with numerical results on a unit square.

In the sequel, the symbol $C$ denotes a generic positive constant, which is allowed to depend on the domain as well as on indicated parameters.

\section{Feasibility of the problem}\label{sec:feasibility}
 
The feasibility of steering the system to a desired degree of mixedness  at given final time   is closely tied to the bilinear  controllability  of the system.     
It  becomes  particularly challenging when the velocity field is restricted to a finite-dimensional basis.   This issue was also   noted in   work \cite{mathew2007optimal} by Matthew {\it et al.}, where  an alternative optimization formulation was  pursued. However, our  ``point to  set"  type of optimization problem  presents a relaxed requirement compared to the  demand of  controllability.

To start with, we show that for any given initial datum $\theta_0$, there exists a velocity field spanned by a set of  finite dimensional cellular flows  such that the final time constraint holds. 
Our proof will primarily  utilize  a  recent work by Bruè, Coti Zelati, and Marconi in \cite{brue2024enhanced}, which provides  a direct estimate on the mixing rate of the cellular flow and how it is related to the period of the closed orbit generated by its Hamiltonian. In particular, a detailed analysis using action-angle coordinates near the  elliptic and hyperbolic points of the Hamiltonian was conducted, as these regions govern the slower mixing rates, to derive an upper bound for the global mixing rate. From this, we further identify the relation between the upper bound and the frequency $N$ of the cellular flows for enhancing mixing. 
It can be shown that if $N$  is chosen sufficiently large, then the desired mixing level can be achieved by the final time $t_f>0$ using a single basis flow $b_N$, and therefore the feasibility of the constraint set follows. 
We begin by recalling the definition from \cite{brue2024enhanced} with a slight modification for general bounded domains.

\begin{definition}
    Let $\gamma\colon [0, \infty)\to [0, \infty)$ be a continuous and decreasing
function vanishing at infinity. The time-independent  velocity field $b$ satisfying \eqref{equ:v} is {\it mixing with rate}   $\gamma(t)$ if for
every $\theta_0\in H^1(\Omega)$ with $\theta_0\notin \ker{(b \cdot\nabla)}$ we have the following estimate
\begin{align}
\|\theta(t_f)-\bar{\theta}\|_{(H^{1}(\Omega))'}\leq \gamma (t)\|\theta_0-\bar{\theta}\|_{H^1(\Omega)},    
\end{align}
for every $t\geq 0$.
\end{definition}
 Theorem 3 in  \cite{brue2024enhanced} is key  to establishing the feasibility of our problem  \eqref{cost}--\eqref{ini} and \eqref{2constr_final}.
  A natural extension  of  its proof further reveals that increasing the frequency of the cellular flows enhances the mixing rate, as stated below. 
 
\begin{proposition} \label{prop1}
    Given any positive  integer  $N\in \mathbb{Z}^+$,  consider  the cellular flow      $b_N(x_1,x_2)=\nabla^{\perp} H_N$ in domain $\Omega = (0, 1)^2$:
        \begin{enumerate}
        \item if  $H_N=\sin (N\pi x_1)\sin (N\pi x_2)$, then for every $\epsilon>0$ the flow generated by $b_N$ is mixing with rate  $\gamma_N(t)$ satisfying 
        \begin{align}
\gamma_N (t)    \leq
\frac{C_1(\epsilon)}{(N^2t)^{\frac{1}{3}-\epsilon}};
\label{1EST_mixingrate}
\end{align}
    \item if  $H_N=\frac{1}{N\pi }\sin (N\pi x_1)\sin (N\pi x_2)$, then for every $\epsilon>0$ the flow generated by $b_N$ is mixing with rate  $\gamma_N(t)$ satisfying 
        \begin{align}
\gamma_N (t)    \leq  
\frac{C_2(\epsilon)}{(N^{\frac{3}{2}}t)^{\frac{1}{3}-\epsilon}},
\label{2EST_mixingrate}
\end{align}
where $C_1(\epsilon) > 0$ and $C_2(\epsilon) > 0$ in \eqref{1EST_mixingrate} and \eqref{2EST_mixingrate}, respectively, are constants depending on $\epsilon$ and the initial condition $\theta_0$.
\end{enumerate}
\end{proposition}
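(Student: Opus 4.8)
The plan is to obtain both bounds as a frequency-tracked version of Theorem~3 in \cite{brue2024enhanced}. I would first recall that proof in the reference configuration ($N=1$): in action--angle coordinates $(h,\vartheta)$ adapted to a single cell the advected scalar is $\theta(h,\vartheta,t)=\theta_0(h,\vartheta-\omega(h)t)$, with $\omega(h)$ the rotation frequency of the orbit $\{H=h\}$, so that decay of the weak norm $\|\cdot\|_{(H^1(\Omega))'}$ is driven by the progressive dephasing of neighboring orbits, i.e. by the twist $\omega'(h)$. (The part of $\theta_0$ lying in $\ker(b\cdot\nabla)$ is constant along orbits and is conserved; the hypothesis $\theta_0\notin\ker(b\cdot\nabla)$ is precisely what rules out the trivial, non-mixing case.) The exponent $1/3$ is fixed by the slowest-mixing zones, a neighborhood of the elliptic center, where the twist degenerates, and of the hyperbolic points on the separatrix, where $\omega(h)\to0$ and the period diverges logarithmically; it emerges from balancing the thin, still-undephased layer surrounding these critical sets against the bulk.

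To pass to general $N$ I would exploit the self-similarity of $H_N$: the square $\Omega=(0,1)^2$ splits into $N^2$ cells of side $1/N$, each an exact copy of the reference cell under the dilation $y=Nx$. Pushing the transport equation through this change of variables and using $\nabla_x=N\nabla_y$ together with $b_N=\nabla_x^{\perp}H_N$, the advection operator rescales as $b_N\cdot\nabla_x=\beta N^2\,\hat b\cdot\nabla_y$, where $\hat b=\nabla_y^{\perp}\big(\sin(\pi y_1)\sin(\pi y_2)\big)$ is the reference velocity and $\beta$ is the amplitude of $H_N$, so that $\beta=1$ in case (1) and $\beta=\tfrac{1}{N\pi}$ in case (2). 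Hence inside each cell the dynamics is the reference flow run up to the effective time $\tau=\beta N^2 t$, and Theorem~3 applies with $t$ replaced by $\tau$. In case (1) this gives $\tau=N^2t$, already matching the target rate $(N^2t)^{-\frac13+\epsilon}$ of \eqref{1EST_mixingrate}; in case (2) it gives only $\tau\sim Nt$, i.e. the weaker bare rate $(Nt)^{-\frac13+\epsilon}$.

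The gap in case (2) is closed when the per-cell estimate is transferred to the global norms, by tracking how $\|\cdot\|_{(H^1(\Omega))'}$ and $\|\cdot\|_{H^1(\Omega)}$ behave under $y=Nx$: in two dimensions the source seminorm $\|\nabla_x\cdot\|_{L^2}$ is scale invariant, whereas the weak norm rewards the cell scale $1/N$, so for structures living at that scale their ratio acquires an extra factor of order $N^{-2}$. Feeding this factor back sharpens $(Nt)^{-\frac13+\epsilon}$ past $(N^{3/2}t)^{-\frac13+\epsilon}$ and is fully consistent with \eqref{1EST_mixingrate} in case (1) as well. The step I expect to be the main obstacle is making the action--angle estimates near the degenerate zones quantitative and \emph{uniform in} $N$, so that the constants $C_1(\epsilon)$ and $C_2(\epsilon)$ carry no hidden powers of $N$; a secondary technical point is that the inhomogeneous operator $\mathcal A=-\Delta+I$ couples the $N^2$ cells, so the passage from cell-wise to global weak norms must be justified through the variational characterization of $(H^1(\Omega))'$ rather than by naive summation over cells.
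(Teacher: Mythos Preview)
Your time-rescaling step is correct: under $y=Nx$ the advection becomes the reference flow run to effective time $\tau=\beta N^2 t$, so $\tau=N^2t$ in case~(1) and $\tau\sim Nt$ in case~(2). The gap is in the norm-transfer step. You argue that rescaling gives an extra factor $N^{-2}$ in the ratio $\|\theta(t)\|_{(H^1)'}\big/\|\theta_0\|_{H^1}$, citing the scale invariance of the $\dot H^1$ seminorm in two dimensions. But the mixing rate here is defined through the \emph{inhomogeneous} pair $(H^1(\Omega))'$ and $H^1(\Omega)$, not $\dot H^{-1}$ and $\dot H^1$. If you carry out the rescaling carefully you find that, for a function supported in one $1/N$-cell, $\|\theta\|_{(H^1(\Omega))'}\le N^{-1}\|\tilde\theta\|_{(H^1([0,1]^2))'}$ (the $L^2$ part of the test-function constraint dominates and costs a factor $N$), while on the source side $\|\tilde\theta_0\|_{H^1([0,1]^2)}\le N\,\|\theta_0\|_{H^1(\Omega)}$ (again the $L^2$ component dominates). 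Combining cells by Cauchy--Schwarz, the two factors cancel. Hence the black-box application of the $N=1$ result gives exactly $\tau^{-1/3+\epsilon}$, which matches \eqref{1EST_mixingrate} in case~(1) but only yields $(Nt)^{-1/3+\epsilon}$ in case~(2), strictly weaker than \eqref{2EST_mixingrate}.

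The paper's proof is different in that it does \emph{not} black-box the reference result: it redoes the action--angle estimate of \cite{brue2024enhanced} directly on the $1/N$-cell, tracking the $N$-dependence through the period $\tilde T_N(I)=\tfrac{4}{N\pi}K(\cos N\pi I)$ (case~2) versus $\tfrac{4}{N^2\pi^2}K(\cos N\pi I)$ (case~1), its derivatives, and the Jacobian $g_N=J_{\tilde\Phi_N}$ in every term of the rate function $r_N(t)$. The point your scaling argument cannot see is that the change-of-variables map $\tilde\Phi_N$ and hence the Jacobian $g_N$ are purely geometric---identical in the two cases---while $\tilde T_N,\tilde T_N'$ carry the amplitude factor $1/(N\pi)$. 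Because $r_N$ mixes powers of $\tilde T_N$, $\tilde T_N'$, and $g_N$ in non-homogeneous combinations, the two cases do not differ by a simple time rescaling, and the paper obtains $r_N\lesssim (N^{7/2}t)^{-1}$ in case~(2) rather than the $(N^3 t)^{-1}$ a pure time change would predict; optimizing over the elliptic/hyperbolic cutoffs $\delta_e,\delta_h$ then yields the intermediate exponent $N^{3/2}$.
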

We extend the  proof  established  in \cite[Theorem 3]{brue2024enhanced} for $N=1$  to an arbitrary frequency $N$ and show how the upper bound of $\gamma_N(t)$ can be lowered by increasing  $N$. The detailed proofs of  \eqref{1EST_mixingrate}--\eqref{2EST_mixingrate} are   provided in  Appendix A. 

To have a uniformly bounded flow basis in time, we employ the estimate \eqref{2EST_mixingrate} for the scaled Hamiltonian.  For given $t_f>0$, set
\begin{align*}
\frac{C_2(\epsilon)}{(N^{\frac{3}{2}}t_f)^{\frac{1}{3}-\epsilon}}
\leq r\frac{\|\theta_0-\bar{\theta}\|_{(H^1(\Omega))'}}{\|\theta_0-\bar{\theta}\|_{H^1(\Omega)}}.
\end{align*}
If  $N$ is chosen to satisfy 
\begin{align}
N \geq \left\lceil  \left(\frac{C_2(\epsilon)\|\theta_0-\bar{\theta}\|_{H^1(\Omega)}}{ r  \|\theta_0-\bar{\theta}\|_{(H^1(\Omega))'}}  \right)^{\frac{2}{1-3\epsilon}} \frac{1}{{t_f}^{\frac{2}{3}}}\right\rceil, \label{cond_N}
\end{align}
where $\lceil  \cdot \rceil$ stands for the celling function, then the constraint set  defined by \eqref{equ:transport}--\eqref{ini} and \eqref{2constr_final} is nonempty, and hence our minimization problem \eqref{cost} is feasible.  

Although a single high-frequency cellular flow can achieve the desired mixing level by the final time, its efficiency  is limited by the underlying dynamics. In such a flow, fluid particles follow the contour lines of the Hamiltonian $H_N(x_1, x_2)$
 forming a grid of vortices where particles are confined to closed orbits within individual cells. Consequently, particles cannot transport across the cell boundaries to achieve efficient global mixing. In contrast, a linear combination of flows with different frequencies disrupts this structure, generating trajectories that cross these barriers and alter the elliptic and hyperbolic regions where slow mixing typically occurs.

\section{Existence and the first-order optimality conditions}\label{sec:optimality_conditions}

In this section, we first establish the existence of a global solution to \eqref{cost}--\eqref{ini} and \eqref{2constr_final}  using the Benamou--Brenier formulation in optimal transport \cite{benamou2000computational}, for $N$ satisfying condition \eqref{cond_N}. 

For notational convenience, we  let
\[\vec{u}(t) = [u_1(t), u_2(t), \ldots, u_N(t)]^T\]
and
\[ \vec{b}(x)=[b_{1}(x), b_{2}(x), \dots, b_{N}(x)]^{T}.\] 
Here we choose  $U_{ad}=(L^2(0, t_f))^N$ as  the set of admissible controls.  
Then according to \eqref{theta_Lp}, it is easy to verify that the kinetic energy defined in \eqref{cost} is finite for any $\theta_0\in L^{\infty}(\Omega)$ and $\vec{u}\in U_{ad}$.

\begin{theorem}
Let  $ \theta_0\in L^{\infty}(\Omega)\cap H^1(\Omega)$ with $\theta_0\geq 0$. For $t_f>0$,  $0<r<1$,  and   frequency $N$ satisfying  \eqref{cond_N},  the optimal control problem governed by \eqref{cost}--\eqref{ini} and \eqref{2constr_final} is convex and admits a global optimal solution.
\end{theorem}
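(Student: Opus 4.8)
The plan is to separate the two assertions—convexity and existence—and to derive both from the Benamou--Brenier change of variables together with the direct method of the calculus of variations.

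For convexity, the key step is to pass from the velocity $v$ to the momentum variable $m = \theta v$. Because the cellular flows satisfy $\nabla\cdot v = 0$, the transport equation \eqref{equ:transport} coincides with the continuity equation $\partial_t\theta + \nabla\cdot m = 0$, which is linear in the pair $(\theta,m)$, while the initial condition \eqref{ini} is affine. The cost \eqref{cost} becomes $\frac12\int_0^{t_f}\int_\Omega |m|^2/\theta\,dx\,dt$, and the integrand $(\theta,m)\mapsto |m|^2/\theta$ is jointly convex on $\{\theta>0\}$ (it is the perspective of $m\mapsto|m|^2$), so the functional is convex. Finally, by \eqref{equ:eta2} the left-hand side of the final-time constraint \eqref{2constr_final} equals $(\mathcal{A}^{-1}(\theta(t_f)-\bar{\theta}_0),\,\theta(t_f)-\bar{\theta}_0)$, a nonnegative quadratic form in $\theta(t_f)$ since $\mathcal{A}^{-1}$ is self-adjoint and positive; hence \eqref{2constr_final} defines a convex sublevel set. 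I would then aim to transfer these observations back to the control: since $v(\cdot,t)=\sum_i u_i(t)b_i$ depends linearly on $\vec{u}$, the convex structure in $(\theta,m)$ should descend to convexity of the problem in $\vec{u}\in U_{ad}$.

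For existence I would run the direct method. Feasibility is already in hand: by Proposition \ref{prop1} and the choice \eqref{cond_N}, the single time-independent control $u_N\equiv 1$ (all other $u_i\equiv 0$) meets \eqref{2constr_final}, so the feasible set is nonempty and $\inf J<\infty$; moreover $J\ge 0$ because $\theta_0\ge 0$ and incompressible transport preserves nonnegativity. Take a minimizing sequence $\{\vec{u}^k\}\subset U_{ad}$. Writing $\int_\Omega\theta|v|^2\,dx = \vec{u}(t)^T M[\vec{u}](t)\,\vec{u}(t)$ with the weighted Gram matrix $M_{ij}[\vec{u}](t)=\int_\Omega\theta(x,t)\,b_i\cdot b_j\,dx$, a uniform lower bound $M[\vec{u}](t)\succeq cI$ gives coercivity and hence boundedness of $\{\vec{u}^k\}$ in $(L^2(0,t_f))^N$. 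Extract a weakly convergent subsequence $\vec{u}^k\rightharpoonup\vec{u}^\ast$, show that the control-to-state map is stable enough that $\theta[\vec{u}^k](t_f)$ converges so the convex constraint \eqref{2constr_final} is preserved in the limit, and use weak lower semicontinuity of the convex cost to conclude $J(\vec{u}^\ast)\le\liminf_k J(\vec{u}^k)=\inf J$. Convexity then promotes $\vec{u}^\ast$ to a global minimizer.

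The main obstacle is the interaction between the finite-dimensional control parametrization and both the convexity and the compactness arguments. In the momentum variables the admissibility constraint reads $m=\theta\sum_i u_i b_i$, which couples $m$ to $\theta$ nonlinearly, so one must verify with care that the convex structure is not lost by restricting to this finite-dimensional family rather than optimizing over all incompressible fields. On the existence side the two technical points are: (i) the uniform positive-definiteness $M[\vec{u}](t)\succeq cI$, which requires controlling where the transported mass of $\theta$ concentrates relative to the zero sets of the $b_i$ (using conservation of $\int_\Omega\theta\,dx$ and of the $L^p$ norms from \eqref{theta_Lp}); and (ii) the stability of the nonlinear map $\vec{u}\mapsto\theta(t_f)$ under weak $L^2$ convergence of the controls, for which I would invoke the DiPerna--Lions theory of renormalized solutions for transport by the divergence-free, Lipschitz fields $v=\sum_i u_i b_i$ to pass to the limit in \eqref{equ:transport} and in the final-time constraint.
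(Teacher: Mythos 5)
Your convexity argument is the same as the paper's: pass to the momentum variable $m=\theta v$, observe that the continuity equation and initial condition are linear in $(\theta,m)$, that $|m|^2/\theta$ is the perspective of $|m|^2$ and hence jointly convex, and that \eqref{2constr_final} is a convex quadratic constraint via \eqref{equ:eta2}. Where you diverge is on existence: the paper does not run a direct method at all --- it rewrites the problem as \eqref{2cost}--\eqref{equ:OTfinal}, notes the feasible set is nonempty by \eqref{cond_N}, and concludes existence of a global minimizer directly from the convexity of the relaxed $(m,\theta)$ problem in the Benamou--Brenier framework, then declares the original problem ``equivalent.'' Your instinct that the finite-dimensional parametrization is the delicate point is well placed: the admissibility constraint $m=\theta\sum_i u_i b_i$ is bilinear in $(\theta,\vec u)$, so convexity of the relaxed problem does not automatically descend to the problem posed over $\vec u\in U_{ad}$, and this is precisely the step the paper passes over lightly. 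On your two technical obstacles: (ii) is not actually an obstacle and does not require DiPerna--Lions --- the fields $b_i$ are smooth and the system is control-affine, so for $\vec u^k\rightharpoonup\vec u^*$ in $(L^2(0,t_f))^N$ the characteristics $\dot X=\sum_i u_i(t)b_i(X)$ are equicontinuous (H\"older-$\tfrac12$ in $t$ uniformly in $k$) and the limit trajectory solves the limit ODE by the standard weak-convergence argument, giving stability of $\vec u\mapsto\theta(t_f)$. Obstacle (i), by contrast, is a genuine weak link in your route: the uniform bound $M[\vec u](t)\succeq cI$ is not available when $\theta_0$ is merely nonnegative (the transported support, though of conserved measure, can migrate toward the common small sets of the $b_i$), so coercivity in $\vec u$ would need either $\operatorname{ess\,inf}\theta_0>0$ (which transport preserves) or a compactness argument carried out in the $(m,\theta)$ variables as in Benamou--Brenier rather than in $\vec u$. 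In short, your proposal is a more honest and more demanding existence proof than the one the paper records, but it is not closed as written, and the one gap you would still need to fill (coercivity or a substitute compactness) is exactly the one you identified.
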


\begin{proof}
Let $m(x,t) = {\vec{u}(t)^{T}}\vec{b}(x)\theta(x,t)$. From  the conditions in \eqref{equ:basisflows} we have 
$$\nabla\cdot \left(\frac{m}{\theta}\right)= 0\quad\text{and}\quad \frac{m}{\theta}\cdot n|_{\Gamma}=0,$$
for any control $\vec{u}(t)\in U_{ad}$. Thus problem \eqref{cost}--\eqref{ini} and \eqref{2constr_final}  can be rewritten as 
\begin{align}
\min~&J(m,\theta) := \frac{1}{2} \int_0^{t_f} \int_{\Omega}\frac{|m(x,t)|^2}{\theta(x,t)} \,d x\,d t \label{2cost}\\
\text{subject to}~&\frac{\partial \theta}{\partial t}+\nabla\cdot m = 0,\label{equ:OTequ}\\
&\theta(x,0) = \theta_0(x),\label{eq:OTini}\\
& \|\theta(t_f)-\bar{\theta}_0\|^2_{(H^1(\Omega))^{\prime}}\leq r^2\left\|\theta_0-\bar{\theta}_0\right\|^2_{(H^1(\Omega))^{\prime}}.\label{equ:OTfinal}
\end{align}
The  constraint set defined by \eqref{equ:transport}–-\eqref{ini} and \eqref{2constr_final} is nonempty provided  $N$ satisfies   \eqref{cond_N}. To establish convexity, we show that the problem minimizes a convex functional over a convex constraint set. 

\textbf{Convex functional.} For $\theta\geq 0$, the function $|m|^2/\theta$  represents the perspective function (e.g.~\cite{combettes2018perspective}) of the convex quadrature $l(m)=|m|^2$. As perspective operations preserve convexity,  the convexity of the cost functional follows immediately. Note that $|m|^2/\theta=0$ if $\theta=0$.

\textbf{Convex constraint set.} The nonempty constraint set defined by \eqref{equ:OTequ}--\eqref{equ:OTfinal} is convex because constraints \eqref{equ:OTequ}--\eqref{eq:OTini} are linear in $m$ and $\theta$, and the inequality constraint \eqref{equ:OTfinal} is convex. Therefore, problem \eqref{2cost}--\eqref{equ:OTfinal} is convex and admits a global solution, as does the equivalent problem \eqref{cost}--\eqref{ini} and \eqref{2constr_final}.
\end{proof} 

\begin{remark}
 Although the optimal control problem \eqref{2cost}--\eqref{equ:OTfinal} is  convex, its strong convexity is not guaranteed,  and thus   a unique global minimizer is not guaranteed.
 
\end{remark}

To derive the first-order necessary conditions of optimality  for  the optimal control problem,   we employ an Euler-Lagrange approach. These conditions are also sufficient for a global minimum due to convexity. The differentiability of the Lagrangian  is justified by the following regularity of the problem:   since $\vec{b}$ is sufficiently smooth, for $\theta_0\in L^{\infty}(\Omega)\cap H^1(\Omega)$ and $\vec{u}\in (L^2(0, t_f))^N$, the solution to the transport equation satisfies $\theta\in L^\infty(0, t_f; L^{\infty}(\Omega)\cap  H^1(\Omega))$ (e.g.~\cite{hu2018boundary,hu2020approximating}). This regularity ensures the Gâteaux differentiability of the control-to-state map $\vec{u}\mapsto \theta$ and, consequently, of the cost functional $J$. A global optimal solution to problem \eqref{cost}--\eqref{ini} and \eqref{2constr_final} is therefore characterized by the following conditions. 
\begin{theorem}\label{thm:optimalitycondition}
Let $ \theta_0\in L^{\infty}(\Omega)\cap H^1(\Omega)$ with $\theta_0\geq 0$. For $t_f>0$,  $0<r<1$,  and   frequency $N$ satisfying  \eqref{cond_N},  $(\vec{u}, \theta)$  is an optimal solution to the problem  governed by \eqref{cost}--\eqref{ini} and \eqref{2constr_final} if and only if there exist $\rho\in L^\infty(0, t_f; L^\infty(\Omega))\cap H^1(\Omega))$ and $\lambda\geq 0$ such that   
\begin{enumerate}
    \item \textbf{State Equation}:
    \begin{equation}\label{equ:stateequation}
    \partial_t\theta + \sum_{i=1}^N u_i (b_i\cdot \nabla \theta ) = 0,\quad \theta(x,0)=\theta_0(x),
    \end{equation}
    with  the final-time constraint:
      \begin{equation}\label{equ:feasibility} 
  \|\theta(t_f)-\bar{\theta}_0\|_{(H^1(\Omega))^{\prime}}^2 \leq r^2C_0^2;
      \end{equation}
where $C_0:=\|\theta_0-\bar{\theta}_0\|_{(H^1(\Omega))^{\prime}};$
    \item \textbf{Adjoint Equation}:
    \begin{equation}\label{equ:adjointequation}
    \begin{split}
    -&\partial_t\rho -  \sum_{i=1}^N u_i (b_i\cdot\nabla\rho )
    = \frac{1}{2} \sum_{i=1}^N |u_i b_i|^2,\\
    & \rho(t_f)
    =-2\lambda\mathcal{A}^{-1}\theta(t_f),
    \end{split}
    \end{equation}
    where $\mathcal{A}^{-1}\theta(t_f)$ satisfies the elliptic problem \eqref{equ:eta};
    \item \textbf{Optimality  Condition}:
    \begin{equation}\label{equ:optimalcontrol}
  \vec{u}(t) =\bm{M}^{-1}(t)\bm{p}(t),
  \end{equation}
where $\bm{M}(t)\in\mathbb{R}^{N\times N}$ is a matrix with entries  $M_{ij}(t)\colon= \int_\Omega \theta b_i\cdot b_j\,dx$ and $\bm{p}(t)\in\mathbb{R}^N$ is a vector with entries $p_i(t):= \int_{\Omega} (b_i \cdot \nabla \rho )  \theta\,dx$;
    \item \textbf{Complementary Slackness}:
    \begin{equation}\label{equ:complementaryslackness}
    \lambda\left( \|\theta(t_f)-\bar{\theta}_0\|_{(H^1(\Omega))^{\prime}}^2 - r^2C_0^2\right) = 0, \quad \lambda\geq0.
    \end{equation}
\end{enumerate}
\end{theorem}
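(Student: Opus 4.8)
The plan is to derive the first-order optimality system by forming a Lagrangian that adjoins the state equation \eqref{equ:transport} and the inequality constraint \eqref{2constr_final} to the cost \eqref{cost}, then setting its variations to zero. Concretely, I would introduce an adjoint variable $\rho(x,t)$ as the multiplier for the transport equation and a scalar KKT multiplier $\lambda \geq 0$ for the mixing constraint, and write
\begin{align*}
\mathcal{L}(\vec{u},\theta,\rho,\lambda)
&= \frac{1}{2}\int_0^{t_f}\!\!\int_\Omega \theta\,|\vec{u}^T\vec{b}|^2\,dx\,dt
+ \int_0^{t_f}\!\!\int_\Omega \rho\,\bigl(\partial_t\theta + \vec{u}^T\vec{b}\cdot\nabla\theta\bigr)\,dx\,dt\\
&\quad + \lambda\bigl(\|\theta(t_f)-\bar\theta_0\|^2_{(H^1(\Omega))'} - r^2 C_0^2\bigr).
\end{align*}
Since the convexity established in the previous theorem guarantees these stationarity conditions are both necessary and sufficient, I do not need a separate argument for sufficiency beyond invoking convexity.

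The core computation is three variations. First, the variation with respect to $\rho$ simply reproduces the state equation and its initial datum, giving \eqref{equ:stateequation}. Second, the variation with respect to $\theta$ is the substantive step: integrating the term $\int\rho\,\partial_t\theta$ by parts in time produces a boundary term at $t_f$ and the operator $-\partial_t\rho$, while integrating the advection term $\int\rho\,(\vec{u}^T\vec{b}\cdot\nabla\theta)$ by parts in space, using the incompressibility and no-penetration properties \eqref{equ:basisflows} of each $b_i$, transfers the derivative onto $\rho$ and kills the boundary integral on $\Gamma$; the running cost contributes $\tfrac12|\vec{u}^T\vec{b}|^2 = \tfrac12\sum_i|u_ib_i|^2$ as the forcing. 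This yields the backward adjoint equation in \eqref{equ:adjointequation}. The terminal condition $\rho(t_f)=-2\lambda\mathcal{A}^{-1}\theta(t_f)$ arises by differentiating the constraint term: using the characterization \eqref{equ:eta2} that $\|\theta\|^2_{(H^1(\Omega))'}=(\mathcal{A}^{-1}\theta,\theta)$ and the self-adjointness of $\mathcal{A}^{-1}$, the Gâteaux derivative of the final-time penalty in the direction $\delta\theta(t_f)$ is $2\lambda(\mathcal{A}^{-1}\theta(t_f),\delta\theta(t_f))$, which must cancel the $+\int_\Omega\rho(t_f)\delta\theta(t_f)\,dx$ boundary term, forcing the stated terminal value. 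Third, the variation with respect to each control $u_i(t)$ gives, pointwise in $t$, the stationarity relation $\int_\Omega\theta\,u_i\,b_i\cdot(\vec{u}^T\vec{b})\,dx + \int_\Omega\rho\,(b_i\cdot\nabla\theta)\,dx=0$; collecting the $N$ equations and recognizing the two integrals as the entries $M_{ij}=\int_\Omega\theta\,b_i\cdot b_j\,dx$ and $p_i=\int_\Omega(b_i\cdot\nabla\rho)\theta\,dx$ (after one integration by parts on the second integral again using \eqref{equ:basisflows}) produces the linear system $\bm{M}(t)\vec{u}(t)=\bm{p}(t)$, hence \eqref{equ:optimalcontrol}. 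Finally, the complementary slackness \eqref{equ:complementaryslackness} and $\lambda\geq0$ are the standard KKT conditions for the single inequality constraint.

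The main obstacle will be the rigorous justification of the terminal-condition computation and the well-posedness of the linear system, rather than the formal integration by parts. For the terminal condition I must be careful that the multiplier for the $(H^1(\Omega))'$ constraint is genuinely $2\lambda\mathcal{A}^{-1}\theta(t_f)$ and lies in the correct space; this relies on the regularity $\theta\in L^\infty(0,t_f;L^\infty(\Omega)\cap H^1(\Omega))$ stated in the excerpt, which guarantees $\mathcal{A}^{-1}\theta(t_f)\in H^2(\Omega)$ and hence $\rho(t_f)\in H^1(\Omega)$, keeping $\rho$ in the asserted space $L^\infty(0,t_f;L^\infty(\Omega)\cap H^1(\Omega))$. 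For the optimality condition I should note that invertibility of $\bm{M}(t)$ is not automatic—it is a Gram matrix of the $b_i$ weighted by $\theta$, so it is positive semidefinite and invertible precisely when the $\{b_i\}$ are linearly independent on the support of $\theta$; since $\theta_0\geq0$ with conserved $L^p$-norms and the $b_i$ are the distinct cellular modes \eqref{cellular_2D}, this independence holds, so $\bm{M}^{-1}(t)$ is well defined and \eqref{equ:optimalcontrol} is meaningful. Once these two points are addressed, the equivalence ``$(\vec{u},\theta)$ optimal $\iff$ conditions hold'' follows from the convexity already proved.
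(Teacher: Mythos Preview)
Your approach is essentially identical to the paper's: form the Lagrangian with adjoint $\rho$ for the transport equation and KKT multiplier $\lambda$ for the terminal inequality, integrate by parts in time and in space (using $\nabla\cdot b_i=0$ and $b_i\cdot n|_\Gamma=0$), and read off the adjoint equation, terminal condition, control equation $\bm{M}\vec u=\bm p$, and complementary slackness; the paper invokes convexity exactly as you do for sufficiency. Two small caveats: your $+$ sign convention on the constraint terms is opposite to the paper's (it uses $\mathcal L=J-[\cdot]$), so be careful to get the sign of the forcing term in \eqref{equ:adjointequation} consistent with the stated terminal condition, and your extra remarks on the invertibility of $\bm M(t)$ and the regularity of $\rho(t_f)$ go beyond what the paper actually argues.
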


\begin{proof}
First we introduce the Lagrangian 
\begin{equation}
\begin{aligned}
&\mathcal{L}(\theta,\vec{u},\theta(t_f); \rho,\lambda)\\
=& J(\theta, \vec{u}) -\bigg[\underbrace{\int_0^{t_f} \int_\Omega \rho\big(\partial_t\theta 
+  \sum_{i=1}^N u_i (b_i\cdot \nabla \theta )\big) \,dx\,dt}_{\text{PDE constraint}} \\
& + \underbrace{\lambda\big( \|\theta(t_f)-\bar{\theta}_0\|_{(H^1(\Omega))^{\prime}}^2 - r^2C_0^2\big)}_{\text{Terminal constraint}}\bigg],
\end{aligned}
\label{Lagrangian}
\end{equation}
where $\rho$ is the Lagrange multiplier or the adjoint state corresponding  to $\theta$ and $\lambda \geq 0$ is the KKT multiplier. 
By Stokes formula and  conditions  given by \eqref{equ:basisflows} we have
\begin{align}
&\int_\Omega \rho \left( \sum_{i=1}^N u_i (b_i \cdot  \nabla \theta )\right)\,dx
=\sum_{i=1}^N u_i \int_\Omega \rho\nabla \cdot \left( b_i\theta\right) \,dx\nonumber\\
=& \sum_{i=1}^N u_i\left( \int_{\Gamma}\rho (b_i \theta)\cdot n  \,dx 
-  \int_{\Omega}  \nabla \rho \cdot (b_i \theta)\right)\,dx\nonumber\\
=& -\sum_{i=1}^N u_i \int_{\Omega} ( b_i \cdot  \nabla \rho) \theta\,dx. \label{Stokes_formu}
\end{align}
Applying integration by parts to the second term on the right hand side of \eqref{Lagrangian} together with \eqref{Stokes_formu} and \eqref{equ:eta2} follows
\begin{equation*}\begin{split}
&\mathcal{L}(\theta,\vec{u},\theta(t_f); \rho,\lambda)  \\
= &\frac{1}{2} \int^{t_f}_0   \int_{\Omega} \theta   \left|\sum_{i=1}^N u_i b_i\right|^2 \,dx dt-\Bigg[(\rho(t_f), \theta(t_f))-\left(\rho_0, \theta_0\right)\\
&+\int_0^{t_f}\int_{\Omega}(-\partial_t \rho \theta )\,dx d t-\sum_{i=1}^N \int_0^{t_f}u_i \Big(\int_{\Omega} (b_i \cdot \nabla \rho )  \theta\,dx \Big)\,d t\\
&+\lambda\left(  \left(\mathcal{A}^{-1} \theta(t_f)-\bar{\theta}_0, \theta(t_f)-\bar{\theta}_0\right) - r^2C_0^2\right)\Bigg],
\end{split}\end{equation*}
where in last term we used  $\mathcal{A}^{-1}\bar{\theta}_0=\bar{\theta}_0$.
The adjoint state $\rho$ is chosen such that the first derivatives  of $\mathcal{L}$ with respect to $\theta$ vanishes, i.e. $\frac{\partial \mathcal{L}}{\partial \theta}=0$, which results in \eqref{equ:adjointequation}. The final-time condition \eqref{equ:adjointequation} is derived by setting $\frac{\partial \mathcal{L}}{\partial \theta(t_f)}=0$. 
Now setting  $ \frac{\partial \mathcal{L}}{\partial u_i}=0$ for each $i=1,2, \dots N$,  follows
\[ \sum^N_{j=1}u_j\Big( \int_{\Omega}   \theta b_i\cdot  b_j  \,dx\Big)-\int_{\Omega} (b_i \cdot \nabla \rho )  \theta\,dx  =0.\]
Let  $\bm{M}(t)\in\mathbb{R}^{N\times N}$ be a  matrix with  entries  $M_{ij}(t)\colon= \int_\Omega \theta b_i\cdot b_j\,dx$ and $\bm{p}(t)\in\mathbb{R}^N$ is a vector with entries $p_i(t):= \int_{\Omega} (b_i \cdot \nabla \rho )  \theta\,dx$. Then the optimality  condition for $\vec{u}$ becomes  \eqref{equ:optimalcontrol}. Lastly,  we require that the KKT multiplier $\lambda$ for the  final-time constraint satisfies  the complimentary slackness condition, that is, 
\begin{equation}
\lambda\left( \|\theta(t_f)-\bar{\theta}_0\|_{(H^1(\Omega))^{\prime}}^2 - r^2C_0^2\right)= 0.
\end{equation}
If the optimal KKT  multiplier $\lambda > 0$, then the constraint \eqref{2constr_final} is active, meaning the inequality is satisfied as an equality at the optimum. The detailed explanation  on a Lagrangian-based view of the adjoint approach can be found in \cite[p.\,63, 1.6.4]{hinze2008optimization}. 
\end{proof}

\section{Numerical implementation}\label{sec:implementation}

The optimility conditions derived in Theorem \ref{thm:optimalitycondition} can be solved  via the fixed-point iteration 
\begin{equation*}
\begin{bmatrix}
u^{k+1}\\
\lambda^{k+1}
\end{bmatrix}
=\begin{bmatrix}
F_1(u^k,\lambda^k)\\
F_2(u^k,\lambda^k)
\end{bmatrix},
\end{equation*}
where the mapping $F_1$ encodes the coupled system given by the state equation \eqref{equ:stateequation}, the adjoint equation \eqref{equ:adjointequation}, and the optimality  condition \eqref{equ:optimalcontrol}, and  the mapping $F_2$ incorporates both the final time condition \eqref{equ:feasibility} and the complementary slackness condition \eqref{equ:complementaryslackness}. Specifically,  conditions \eqref{equ:feasibility} and \eqref{equ:complementaryslackness} can be solved  using the  following fixed-point iteration 
\begin{equation}\label{equ:updatelambda}
\lambda^{k+1} = \max\left(\lambda^k + \beta^k \left( \|\theta^k(t_f)-\bar{\theta}_0\|_{\left(H^1(\Omega)\right)^{\prime}}^2 - r^2C_0^2\right),0\right),
\end{equation}
where $\beta^k > 0$ is a step-size parameter. At convergence, we recover
$$\lambda^* = \max\left(\lambda^* + \beta^* \left( \|\theta^*(t_f)-\bar{\theta}_0\|_{\left(H^1(\Omega)\right)^{\prime}}^2 - r^2C_0^2\right),0\right)$$
for some $\beta^*>0$, which implies $\lambda^* \geq 0 $ and 
 \begin{equation*} \begin{cases}
        \text{if } \|\theta^*(t_f)-\bar{\theta}_0\|_{\left(H^1(\Omega)\right)^{\prime}}^2 - r^2C_0^2< 0\text{, then } \lambda^* = 0, \\
        \text{if } \lambda^* > 0\text{, then }\|\theta^*(t_f)-\bar{\theta}_0\|_{\left(H^1(\Omega)\right)^{\prime}}^2 - r^2C_0^2 = 0,
        \end{cases}
    \end{equation*}
    precisely satisfying conditions \eqref{equ:feasibility} and \eqref{equ:complementaryslackness}. In particular, when updating $u^{k+1}$, we introduce a relaxation factor $\alpha^{k+1}\in(0,1]$ to control the update:
\begin{equation}\label{equ:updateu}
u^{k+1} = (1-\alpha^{k+1})u^k + \alpha^{k+1}\bm{M}_{k+1}^{-1}\bm{p}_{k+1},
\end{equation}
where  $\bm{M}_{k+1}\in\mathbb{R}^{N\times N}$ and  $\bm{p}_{k+1}\in\mathbb{R}^N$ with  entries $\int_\Omega \theta^{k+1} b_i\cdot b_j\,dx$ and $\int_\Omega \theta^{k+1}\nabla\rho^{k+1}\cdot b_i\,dx$, respectively. This relaxation enhances stability by preventing overshooting and oscillations between iterations.

Let
\[\mu^{k+1}= \|\theta^{k+1}(t_f)\|_{(H^1(\Omega))^{\prime}}^2 - r^2C_0^2\]
be the final-time constraint violation for iterating  $\theta^{k+1}$ and set $0<\varepsilon_1,\varepsilon_2\ll 1$ as the stopping criteria parameters. We propose the following Algorithm \ref{alg:FPI} to solve the first-order optimality conditions governed by \eqref{equ:stateequation}--\eqref{equ:complementaryslackness}. For spatial discretization, we adopt the spline collocation method (e.g.~\cite{lai2022multivariate,lai2023trivariate}), with bivariate splines serving as the basis functions for approximating numerical solutions in space (e.g.~\cite{lai2007spline,awanou2005multivariate}). For the temporal aspect, the implicit Euler scheme is used for the state equation \eqref{equ:stateequation}, while the explicit Euler scheme is applied to the (backward) adjoint equation \eqref{equ:adjointequation}.

\begin{algorithm}[htbp]\label{alg:FPI}
\caption{Fixed-point iteration for the optimal control problem \eqref{cost}--\eqref{ini} and \eqref{2constr_final}}
\KwIn{$u^0, \text{ initial data } \theta_0, \text{ final time } t_f, \text{ parameter } \lambda^0>0, \text{ stopping criteria } 0<\varepsilon_1,\varepsilon_2 \ll 1$} 
\Repeat{$\mu^{k+1}\leq \varepsilon_1$ \emph{and} $|J(u^{k+1}) - J(u^k)|/|J(u^k)|\leq\varepsilon_2$ }{
   Solve the state equation \eqref{equ:stateequation} with $u^k$ for $\theta^{k+1}$\;
   Solve the equation \eqref{equ:eta} with $\theta^{k+1}$ for $\eta^{k+1}$\; 
   Update $\beta^{k+1}$ and update $\lambda^{k+1}$ via \eqref{equ:updatelambda};
   Solve the adjoint equation \eqref{equ:adjointequation} with $\theta^{k+1}(t_f)$ for $\rho^{k+1}$\; 
   Update $\alpha^{k+1}$ and update $u^{k+1}$ via \eqref{equ:updateu}\;
   }
\end{algorithm}

\section{Numerical results}\label{sec:numerical_results}
As illustrated in Fig. \ref{fig:displayonsquare}, the streamlines of the cellular flows within each cell form closed circular patterns and are tangent to, but do not cross, those of adjacent cells. This structure prevents particles from accessing the entire domain by trapping them in localized regions.  To address this limitation, we explore the use of linear combinations of  cellular flows of different frequency. The key idea is that if a particle is trapped in one cell, activating a different flow can transport it across the cell boundaries to achieve  more global mixing.  In our numerical experiments,
  we investigate three different combinations: $i = 1~\&~2$, $i = 1~\&~3$, and $i = 1~\&~4$, all simulated up to a final time of $t_f=1$.

For the numerical solution of PDEs, we employ a spline-based solver with a mesh size (i.e., the length of the longest edge in the triangulation) of $h = 0.0884$ and a time step of $\tau = 0.005$. The optimization algorithm is initialized with $\lambda^0 = 1$, and the convergence criteria are set to $\varepsilon_1=\texttt{5e-4}$ and $\varepsilon_2=\texttt{1e-3}$. Additionally, we implement adaptive update strategies for $\alpha^k$ and $\beta^k$:
\begin{equation*}
\alpha^{k+1} = \begin{cases}
\max(0.5\alpha^k,0.05) &\text{if }\mu^{k+1}<\varepsilon_1,\\
\alpha^k&\text{otherwise},
\end{cases}
\end{equation*}
and 
\begin{equation*}
\beta^{k+1} = \begin{cases}
\frac{2J(u^k,\theta^{k+1})}{\mu^{k+1}} &\text{if }\mu^{k+1} \geq\varepsilon_1\text{ and }J(u^k,\theta^{k+1})\geq 1,\\
100 &\text{if }\mu^{k+1} \geq\varepsilon_1\text{ and }J(u^k,\theta^{k+1})< 1,\\
250 &\text{if }\mu^{k+1}<\varepsilon_1.\\
\end{cases}
\end{equation*}

For the control parameters, we set $r = 0.3$ in the final time constraint \eqref{2constr_final} and the initial control inputs as  $u_1(t)= 0$ and $u_i(t)=1$, $i=2,3,4$, for $t\in [0,1]$, for each combination.  This initial configuration activates the multi-cell flows while keeping the single-cell flow idle. Two types of initial data are investigated in our numerical experiments. First consider 
$$\theta_0(x_1,x_2) = \tanh\left(\frac{2x_2-1}{0.2}\right)+1,$$ 
which approximates  a bicolor distribution, yet satisfies the regularity required in Theorem \ref{thm:optimalitycondition}.
The optimal mixing results under  various combinations of cellular flows are displayed in Fig. \ref{fig:mixingonsquare}.
\begin{figure*}[htbp] 
\centering
\scalebox{1.1}[1]{\includegraphics[width = 0.9\textwidth]{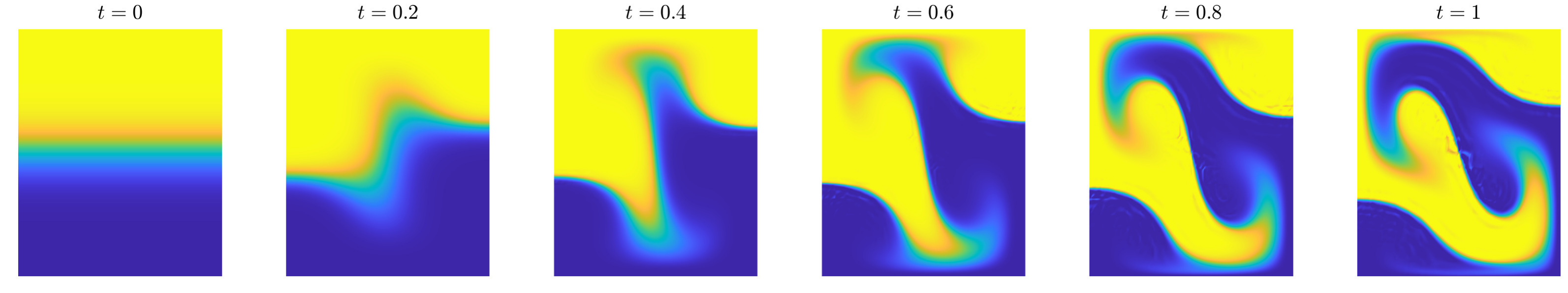}}\\ 
\scalebox{1.1}[1]{\includegraphics[width = 0.9\textwidth]{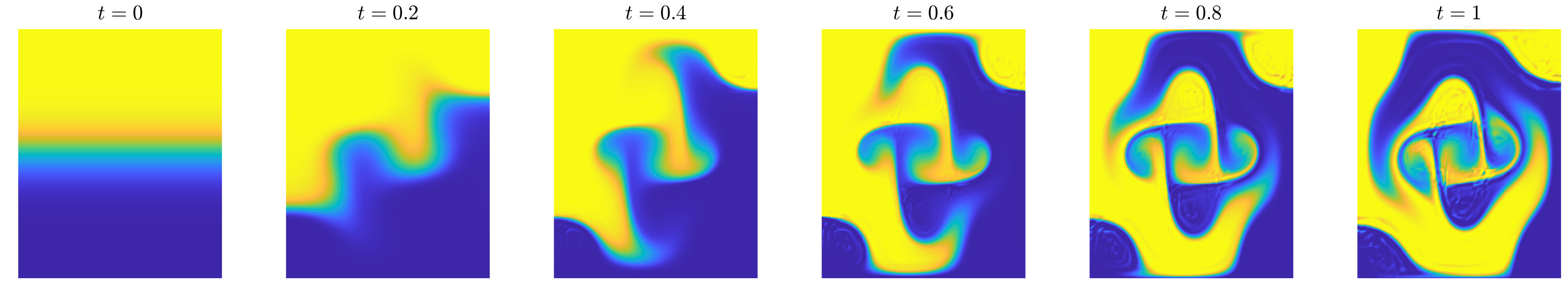}}\\
\scalebox{1.1}[1]{\includegraphics[width = 0.9\textwidth]{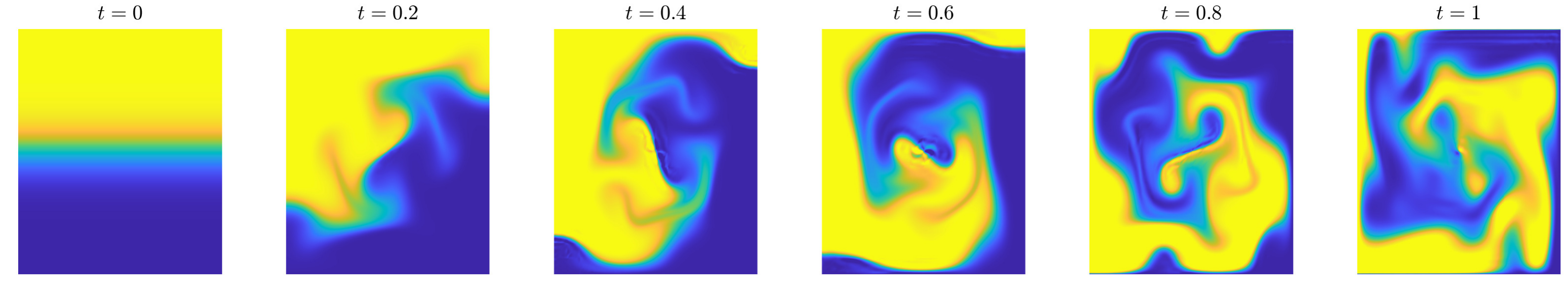}}
\caption{Snapshots of the optimal mixing at $t=0,0.2,0.4,0.6,0.8,1$,  for the initial datum  $\theta_0(x_1,x_2) = \tanh((2x_2-1)/0.2)+1$ in the unit square -- \textbf{Top:} $b_1~\&~b_2$; \textbf{Middle:} $b_1~\&~b_3$; \textbf{Bottom:} $b_1~\&~b_4$}\label{fig:mixingonsquare}
\end{figure*}
As time evolves, the scalar field becomes progressively more mixed as finer and finer structures develop. The detailed quantifications of the system performance under each optimal control are shown in Fig.~\ref{fig:infoonsquare}. In particular, we compare mixing under constant control inputs as the initial configuration versus our optimal strategy.   The former  employs only a single flow basis function in each experiment, activated consistently throughout the entire simulation.

The first column of  Fig. \ref{fig:infoonsquare} compares the mix-norms for the constant input case (dashed line), the optimal input case (red solid line), and the target level $rC_0$.  The mix-norm for the constant case shows little decay, whereas the optimal case converges to the target, demonstrating a substantial improvement. As shown in the  last two columns of  Fig. \ref{fig:infoonsquare}, the optimal protocol activates both the single-cell flow $u_1(t)$ and multi-cell flows $u_i(t), i=2,3,4,$ achieving effective mixing through their combination. 
It is worth noting that a sign change in the controls indicates a required reversal of flow orientation, as shown in the last plot of the fourth  column.


The optimality of our control design is supported by two results in Fig. \ref{fig:infoonsquare}. First, the relative error of the cost functional converges, as shown in the second column. Second, the complementary slackness condition (see Eq. \eqref{equ:complementaryslackness} in Theorem \ref{thm:optimalitycondition}) is satisfied. Our computed optimal KKT  multiplier is non-zero, indicating an active final-time constraint. This is visually confirmed by the mixing result with the optimal control (red solid line) reaching and tracking the target (dotted line) in the first column.


\begin{figure*}[htbp]
\centering
\includegraphics[width = 0.99\textwidth]{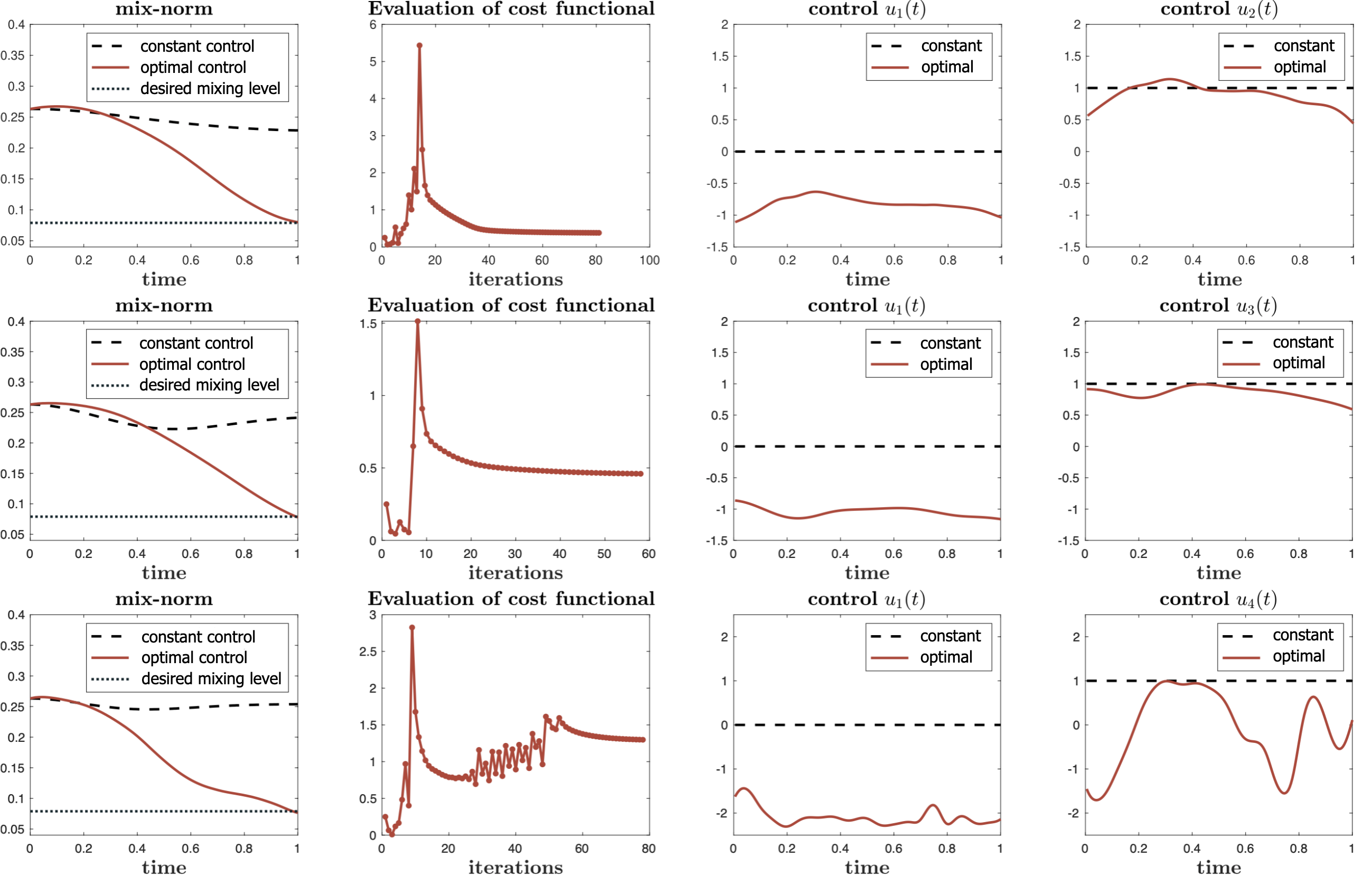}
\caption{Quantification of system performance in the unit square -- \textbf{Top:} $b_1~\&~b_2$; \textbf{Middle:} $i = b_1~\&~b_3$; \textbf{Bottom:} $i = b_1~\&~b_4$}\label{fig:infoonsquare}     
\end{figure*}

We next examine a second  initial distribution given by
$$\theta_0(x_1,x_2) = \sin(2\pi x_2)+1,$$ 
using the same  configuration as in the first case. This yields analogous numerical results, with the mixing outcomes and optimal control profiles displayed in Figs. \ref{fig:mixingonsquareanother} and \ref{fig:infoonsquareanother}. \begin{figure*}[htbp] 
\centering
\scalebox{1.1}[1]{\includegraphics[width = 0.9\textwidth]{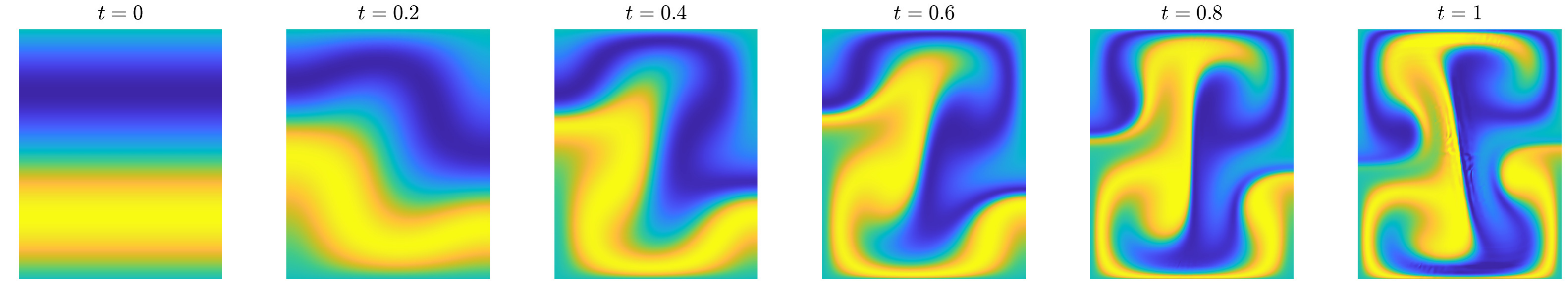}}\\
\scalebox{1.1}[1]{\includegraphics[width = 0.9\textwidth]{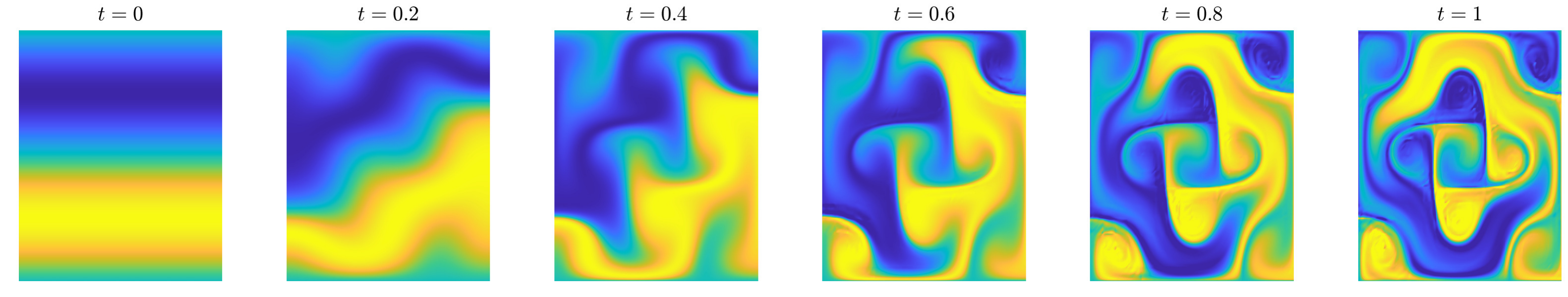}}\\
\scalebox{1.1}[1]{\includegraphics[width = 0.9\textwidth]{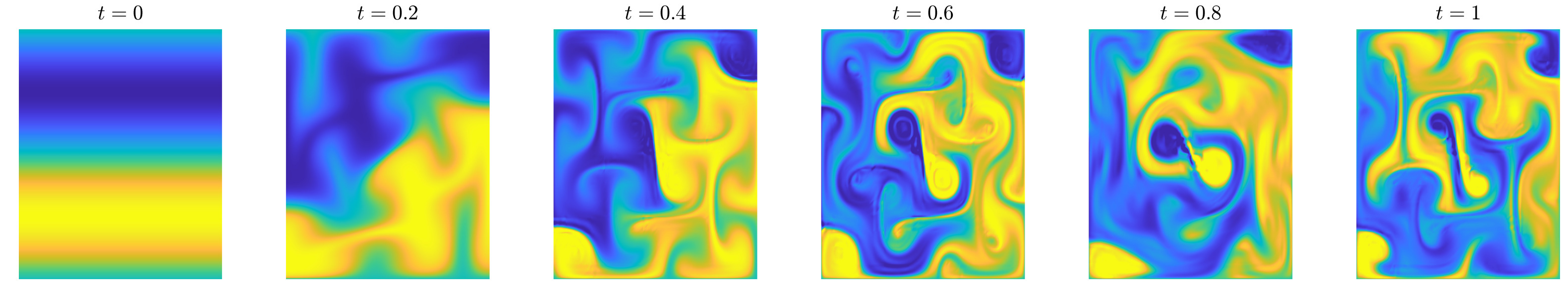}}
\caption{Snapshots of the optimal mixing at $t=0,0.2,0.4,0.6,0.8,1,$  for the  initial datum  $\theta_0(x_1,x_2) =  \sin(2\pi x_2)+1$ in the square -- \textbf{Top:} $b_1~\&~b_2$; \textbf{Middle:} $i = b_1~\&~b_3$; \textbf{Bottom:} $i = b_1~\&~b_4$}\label{fig:mixingonsquareanother}
\end{figure*}

%

\begin{figure*}[htbp]
\centering
\includegraphics[width = 0.99\textwidth]{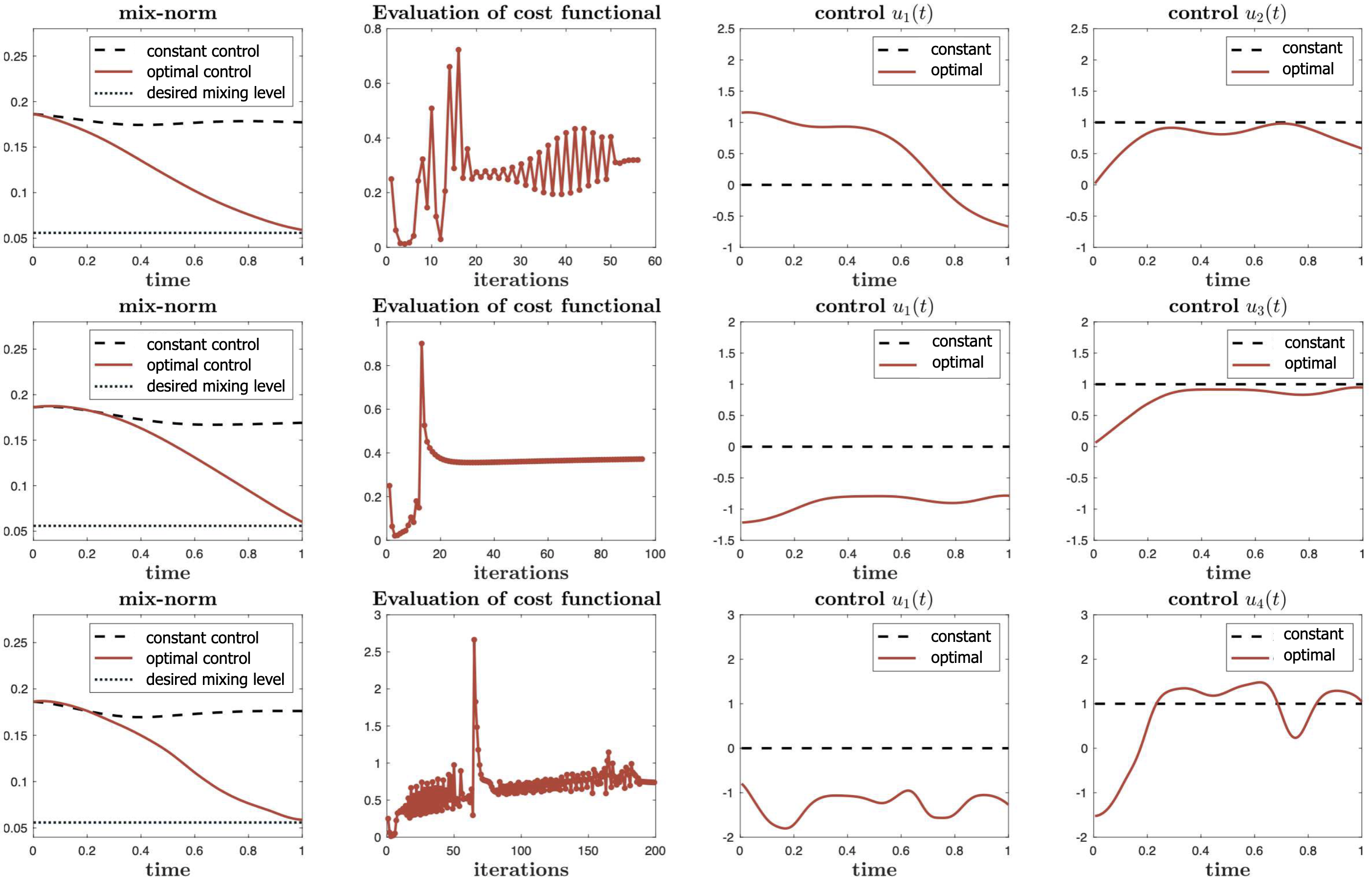}
\caption{Quantification of system performance in the square  -- \textbf{Top:} $b_1~\&~b_2$; \textbf{Middle:} $i = b_1~\&~b_3$; \textbf{Bottom:} $i = b_1~\&~b_4$}\label{fig:infoonsquareanother}
\end{figure*}

Our analysis and numerical implementation can be extended to other domains, provided that cellular flows are suitably defined and satisfy the non-penetration boundary condition in \eqref{equ:basisflows}. 

\section{Conclusion}

This paper proposes a control design of fluid mixing problem, drawing inspiration from  the LAP for incompressible
flows. A fundamental  departure from the classic framework is the specification of a \emph{constraint set} of acceptable final states in terms of mix-norms, rather than a single target. This is more natural setup for mixing applications, as it emphasizes  a desired degree of mixedness  without requiring an exact final configuration. We represent our control velocity in terms of two-dimensional cellular flows, whose structure inherently satisfies incompressibility and no-penetration boundary conditions.

A critical theoretical challenge is whether the imposed constraint set is feasible under cellular flow mixing protocols. To address this, we generalize a recent result  by  \cite{brue2024enhanced}  on the mixing rate of the cellular flow, establishing that the mixing rate can be enhanced by increasing the flow frequency. Then we show that our control design preserves the convexity advantages of the Benamou--Brenier formulation. The resulting optimal control problem \eqref{cost}--\eqref{ini} and \eqref{2constr_final} is convex and admits global optimal solutions. Finally, we derive the optimality conditions for the control problem and propose a fixed-point iteration to compute the optimal control. Numerical simulations  demonstrate that our design successfully produces controls that satisfy the specified mix-norm constraint.

The constructive approximation of our control design using divergence-free basis flows is critical, as it preserves the convexity inherent in the Benamou–Brenier optimal transport formulation. For a general divergence-free velocity field, convexity is not guaranteed because the corresponding condition 
$\nabla\cdot(m/\theta)$, which is automatically satisfied with cellular-flow mixing protocols, breaks the convex structure. This approximation framework can also be extended to other divergence-free bases, provided the feasibility of achieving the desired mix-norm with these flows is established.

Our numerical experiments confirm that linear combinations of different cellular flows achieve more efficient mixing by facilitating transport across cell boundaries, thereby inducing more chaotic advection. 
However, a qualitative description of the mixing rate for such combinations remains  open. The investigation of this description, along with other possible Hamiltonian flow protocols for effective mixing, is one of the  central directions for our future work.

\section*{Appendix A: Sketch of the proof of Proposition \ref{prop1}} \label{appendix_A}

 Proposition \ref{prop1} is a direct extension of the result by  Bruè, Coti Zelati, and Marconi \cite[Theorem 3]{brue2024enhanced} for the cellular flow generated by the Hamiltonian $H_c=\sin (x_1)\sin(x_2)$ in a two-dimensional  torus $\mathbb{T}^2$. This work  estimates the mixing rate  by linking it to the period of the closed Hamiltonian orbit. In particular, 
 it hinges on a detailed analysis using  action-angle coordinates for mixing near elliptic and hyperbolic points, which is necessary to account for the slower mixing in these regions.
 
\noindent \textbf{Lemma A1}(\cite[Theorem 3]{brue2024enhanced})
{\it Consider the standard cellular flow $b_c=\nabla^{\perp} H_c$ with
 $H_c(x_1, x_2)=\sin x_1\sin x_2$ on $\mathbb{T}^2$,
  a two-dimensional torus. Then for every $\epsilon>0$ the vector field $b_c$ is mixing with rate
   \begin{align}
Ct^{-1}\leq \gamma_c(t)\leq C(\epsilon)t^{-\frac{1}{3}+\epsilon} \label{cell_T1}
  \end{align}
  }
The  inequalities hold up to a universal constant $C$ and a constant $C(\epsilon)$ depending  on $\epsilon$, respectively. Here $\epsilon>0$ is an arbitrarily small constant. 
Since the Hamiltonian $H_c$ is $2\pi$-periodic and the flow field is symmetric with respect to the lines $x_1=\pi$ and $x_2=\pi$, the analysis in \cite{brue2024enhanced} was sufficiently performed over the domain $[0, \pi]^2$.  The cellular flow also naturally satisfies the no-penetration boundary condition on the boundary of each cell.   
These properties allow us to generalize the proof to the bounded domain
 $\Omega=(0, 1)^2$ using Hamiltonians $H_i(x_1, x_2)=\sin (i\pi x_1)\sin (i\pi x_2)$ or the scaled  one {$H_i(x_1, x_2)=\frac{1}{i\pi}\sin (i\pi x_1)\sin (i\pi x_2)$}, for $i=1,2,\dots, N$.

%

In the following discussion, we present the necessary tools and results from \cite{brue2024enhanced} to prove \eqref{cell_T1}  for the cellular flows generated by the  Hamiltonians $H_N$ with  an arbitrary positive frequency  $N$ and show that the upper bound coefficient $C(\epsilon)$ can be lowered by increasing $N$.

\subsection*{A1. Action-Angle Coordinates}

The action-angle coordinates will be used to facilitate the estimation of the mix-norm. 
For the  convenience of numerical implementation, we set $\Omega=(0,1)^2$. 
Note that the Hamiltonian $H_N$ has period 
$\frac{2}{N}$ in both $x_1$ and $x_2$, resulting in $N^2$ identical cells tiling $\Omega$.  Since   the dynamics of the scalar field is confined to each  cell, it suffices to  restrict  our discussion to the fundamental cell    $[0, \frac{1}{N}]\times [0, \frac{1}{N}]$.
 
 Consider the transport equations \eqref{equ:transport}--\eqref{equ:v} with $\theta(0)=\theta_0$ and its associated flow $X\colon \mathbb{R}\time \Omega \to \Omega$ defined as the solution to the ODE
\begin{align}
    \partial_t X(t,x)=v(t, X(t,x)), \qquad X(0, x)=x. \label{ODE_1}
\end{align}
Let  $(h_0, h_1)\subset H_N(\Omega)$ and 
$\Omega_0\colon=H^{-1}_N((h_0, h_1))\subset (0, \frac{1}{N})$.  Given $h\in (h_0, h_1)$, we let $T_N(h)$ be the period of the closed orbit $\{H_N=h\}$. Then $x=x(h)\colon (h_0, h_1)\to \Omega_0$ solves the following ODE
\begin{align}
    x'(h)=\frac{\nabla H_N}{|\nabla H_N|^2}(x(h)), \qquad x(h_0)=x_0. \label{ODE_2}
\end{align}
Based on the flow map in \eqref{ODE_1}, one can define the coordinates 
$\Phi_N\colon S^1\times (h_0, h_1)\to \Omega_0$
by
\begin{align}
\Phi_N(\vartheta, h)\colon=X(\vartheta T_N(h), x(h)), \label{map_Phi}
\end{align}
where $S^1=[0, 1)$.  Note that (see, e.g.~\cite[(2.7)]{brue2024enhanced})
\begin{align}
    |\text{det} D\Phi_N (\vartheta, h)|=T_N(h). \label{Th}
\end{align}

\subsection*{A2. Hamiltonian $H_N=\sin(N\pi x_1)\sin(N\pi x_2)$}

For Hamiltonian $H_N=\sin(N\pi x_1)\sin(N\pi x_2)$ with $N\in \mathbb{Z}^+$,  we further consider the coordinates
\begin{equation}\begin{split}
&\tilde{\Phi}_N\colon S^1\times \left(0, \frac{1}{2N}\right)
\to \left(0,  \frac{1}{N}\right)^2\setminus \left\{ \left(\frac{1}{2N}, \frac{1}{2N}\right)\right\},\\ 
 &\tilde{\Phi}_N (\vartheta, I)
 =X\left(\vartheta \tilde{T}_N(I), \left(\frac{1}{2N}, I\right)\right),\label{2map_Phi}
\end{split}\end{equation}
where $\tilde{T}_N(I)$ is the period of the closed orbit in $\{H_N = \sin (N\pi I) \}$
passing through the point $(1/2N, I )$. These coordinates are related to the action-angle coordinates \eqref{map_Phi} by
\begin{equation}
\begin{split}
&\tilde{\Phi}_N (\vartheta, I)
 =\Phi(\vartheta, \sin (N\pi I)), \\
 & \tilde{T}_N(I) 
 = T_N (\sin (N\pi I) )
= \frac{4}{N^2\pi^2}K(\cos (N\pi I)), \label{3map_Phi}
\end{split}
\end{equation}
where 
\begin{align}
    K(k)=&\int^{\frac{\pi}{2}}_0\frac{d\vartheta}{\sqrt{1-k^2\sin^2(\vartheta)}}, \quad -1<k<1,\label{K2}
    \end{align}
is the complete elliptic integral of the first kind. 
Then $f(t, \vartheta, I)\colon=\theta(t, \tilde{\Phi}(\vartheta, I))$ solves the equation
\begin{align}
    \partial_t f+\frac{\partial_{\vartheta}f}{\tilde{T}_N(I)}=0
    \text{ for}\  (t, \vartheta, I) \in \mathbb{R}\times \mathbb{S}^1\times \left[0, \frac{1}{2N}\right]. \label{ODE_3}
\end{align}
With the help of \eqref{Th} and \eqref{3map_Phi}, we have 
\begin{equation}\begin{split}
    J_{\tilde{\Phi}_N}(\vartheta, I)&=N\pi J_{\Phi_N}(\vartheta, \sin(N\pi I))\cos (N\pi I)=N\pi\tilde{T}_N(I)\cos (N\pi I).  \label{4map_Phi}
\end{split}\end{equation}
Let 
\begin{align}
    g_N(I)\colon=N\pi\tilde{T}_N(I)(\cos N\pi I) \label{def_gN}
\end{align} 
and consider the weighted Sobolev space $H^1_g$ defined by

\[\|f\|_{H^1_g}\colon= \int_{\mathbb{S}^1\times (0, \frac{1}{2N})}(|f(\vartheta, s)|^2+|\partial_sf(\vartheta, s)|^2)
g(s)\,ds\,d\vartheta.\]
The following result from \cite[Lemma A.1]{brue2024enhanced} provides a crucial bridge to understand the negative Sobolev space in terms of the action-angle coordinates.

\noindent\textbf{Lemma A2}  (\cite[Lemma A.1]{brue2024enhanced})
{\it Let $\theta=\theta(t, x)$ and $f(t, \vartheta, s)=\theta(t, \Phi(\vartheta,s))$ for some change of coordinate $\Phi$. Assume that the Jacobian $J_{\Phi}(\vartheta, s)=g(s)$ for some smooth positive function $g$.
 If
\begin{align*}
    \sup_{\|\phi\|_{H^1_g}\leq 1} \int f(t, \vartheta, s) \phi(\vartheta, s)g(s)\,ds
    \leq \|f(0, \cdot)\|_{H^1_g} r(t)
\end{align*}
for some $r(t)$, then
\begin{align*}
    \|\theta\|_{H^{-1}}
    \leq  \|\theta_0\|_{H^1}(1+\text{Lip} (\Phi))^2r(t),
    \end{align*}
where $\text{Lip}(\Phi)$ is the  Lipschitz constant of 
$\Phi$ given by $\text{Lip}(\Phi)=\sup_{(\vartheta,s)\in \Omega}\|D\Phi (\vartheta,s)\|$. 
}

Assume that $\rho_0\in C^1(\Omega)$ be mean zero along the level sets of $H_N$. For fixed $\delta_e, \delta_h\in (0, \frac{1}{2N})$, we have
\begin{align*}
    &\sup_{\|\phi\|_{H^1_{g_N}}\leq 1}  \int^{\frac{1}{2N}-\delta_e}_{\delta_h}
    \int_{\mathbb{S}^1}f(t, \vartheta, s)\phi(\vartheta, s)g_N(s)\,d\vartheta\,ds    \leq \|f(0, \cdot)\|_{H^1_{g_N}}r_N(t),
\end{align*}
where 
\begin{align}
r_N(t)&=\frac{1}{N\pi t}\|g^{-\frac{1}{2}}_N\|^2_{L^2(\delta_h, \frac{1}{2N}-\delta_e)}\nonumber\Bigg( \frac{\tilde{T}^2_N(\delta_h) g_N(\delta_h)}{|\tilde{T}'_N|}
+\frac{\tilde{T}^2_N(\frac{1}{2N}-\delta_e)
 g_N(\frac{1}{2N}-\delta_e)
}{|\tilde{T}'_N|}\nonumber\\
&+\left\|\Bigg(\frac{\tilde{T}^2_N g_N}{\tilde{T}'_N}\Bigg)' \right\|_{L^1(\delta_h, \frac{1}{2N}-\delta_e)}
+\left\|\frac{\tilde{T}^2_N g^{\frac{1}{2}}_N}{\tilde{T}'_N} \right\|_{L^2(\delta_h, \frac{1}{2N}-\delta_e)}\Bigg)\nonumber\\
&+\frac{1}{N\pi  t}\|g^{-\frac{1}{2}}_N\|_{L^2(\delta_h, \frac{1}{2N}-\delta_e)}
\left\|\frac{\tilde{T}^2_N g^{\frac{1}{2}}_N}{\tilde{T}'_N} \right\|_{L^2(\delta_h, \frac{1}{2N}-\delta_e)}\label{EST_rN}
\end{align}
is derived in the proof of \cite[Theorem 5]{brue2024enhanced} and $g_N$ is defined by \eqref{def_gN}. 
 
To estimate $r_{N}(t)$, we need to estimate $\tilde{T}_N, \tilde{T}'_N$ and $\tilde{T}''_N$. In light of \eqref{3map_Phi}, we know that when $I\to 0$, $K(\cos(N \pi I))\to \ln (\frac{4}{N\pi I})$, and hence
\begin{align*}
 \tilde{T}_N(I) 
\sim \frac{4}{N^2\pi^2}\ln \left(\frac{4}{N\pi I}\right), \quad \text{as}\quad I\to 0.
\end{align*}
When $I\to \frac{1}{2N}$, $K(\cos(N\pi I))\to \frac{\pi }{2}$, thus
\begin{align*}
\tilde{T}_N(I) 
\sim \frac{4}{N^2\pi^2}\cdot \frac{\pi }{2}= \frac{2}{N^2\pi},
 \quad \text{as}\quad I\to \frac{1}{N}.
\end{align*}
Since $\tilde{T}_N$ is smooth in  $(0, \frac{1}{N})$, the global behavior of $\tilde{T}_N$  satisfies 
\begin{align}
 \tilde{T}_N(I) 
\sim \frac{1}{N^2\pi^2}\ln \left(\frac{1}{N\pi I}\right), \quad \forall I\in (0, \frac{1}{N}).
\label{3EST_Ttilde}
\end{align}
Similarly, we can compute  $\tilde{T}'_N(I)$ and $\tilde{T}''_N(I)$  based on \eqref{3map_Phi}, which  will involve the estimation of the complete elliptic integral of the second kind, and this leads to
 \begin{align}
    \tilde{T}'_N(I)
    \sim \frac{1}{N \pi }\Big(\frac{ \frac{1}{2N}-I}{ I}\Big)
\quad \text{and}\quad
 \tilde{T}''_N(I) \sim \frac{1}{(N\pi I)^2}\label{4EST_Ttilde}
\end{align}
for all $I\in (0, \frac{1}{N})$. Using \eqref{3EST_Ttilde}-\eqref{4EST_Ttilde}, we get
 \begin{align}
\Bigg|\frac{ \tilde{T}^2_N(I) g_N(I)}{\tilde{T}'_N(I)}\Bigg|
\leq C \frac{1}{N^3}(1+\ln |I|^3)I,\label{1EST_Tg}
 \end{align}
  \begin{align}
\left\|\Bigg(\frac{\tilde{T}^2_N g_N}{\tilde{T}'_N}\Bigg)' \right\|_{L^1(\delta_h, \frac{1}{2N}-\delta_e)}
\leq C\frac{1}{N^4} |\ln (N\delta_e)|), \label{2EST_Tg}
\end{align}
  \begin{align}
\Bigg\|\frac{ \tilde{T}^2_N(I) g^{1/2}(I)}{\tilde{T}'_N(I)}\Bigg\|_{L^2(\delta_h, \frac{1}{2N}-\delta_e)}
\leq C \frac{1}{N^3}\ln |\delta_e|,  \label{3EST_Tg}
\end{align}
and
\begin{align}
    \|g^{-1/2}\|^2_{L^2}
    &
    \leq C|\ln (N\delta_e)|. \label{EST_g}
\end{align}
For  $\delta_e>0$ sufficiently small, combining \eqref{EST_rN}  with \eqref{1EST_Tg}--\eqref{EST_g} yields 
    \begin{align}
 r_N(t)
    &\leq C |\ln (N\delta_e)|^2\frac{1}{N^4t}. \label{2EST_r}
\end{align}

Due to page constraints, we omit the detailed computations and refer the reader to \cite[Lem. 4.3, (4.17)]{brue2024enhanced}.
With these necessary results,  we now turn to the proof of Proposition \ref{prop1}. The main argument will be given for part 1), as the proof of part 2) follows an analogous procedure.

{\it Proof of Proposition \ref{prop1} 1). }
The proof mirrors the  procedure of  \cite[Thm. 3]{brue2024enhanced}: (i) a global estimate of the mixing rate in the region away from elliptic and hyperbolic points, and (ii) local estimates in the neighborhoods of these critical points for specialized initial data.

Let $\tilde{\Omega}\subset (0, \frac{1}{N})^2$ be an open set such that $\mathrm{supp}(\theta(t))
\cap (0,\frac{1}{N})^2\subset \tilde{\Omega}$ for every $t\in \mathbb{R}$ and $\mathcal{L}^2$ stands for the Lebesgue measure on $(0, \frac{1}{N})^2$. 
Given  sufficiently  small $\delta_h, \delta_e\in (0, \frac{1}{2N})$, by \eqref{2map_Phi} we have
\begin{align*}
    &\|\theta(t)\|_{(H^1(\Omega))'}=
    N^2
    \sup_{\|\psi\|_{H^1(\Omega)}\leq 1}
    \int_{ (0, \frac{1}{N})^2 } \theta(t)\psi\,dx,
\end{align*}
    where
    {\small\begin{align}
    \sup_{\|\psi\|_{H^1}\leq 1} \int_{\left (0, \frac{1}{N}\right)^2} \theta(t)\psi\,dx =&\sup_{\|\psi\|_{H^1}\leq 1} \Bigg(\int_{\tilde{\Phi} \left(\mathbb{S}^1\times \left(0, \delta_h\right)\right)} \theta(t)\psi\,dx\nonumber\\
    & +\int_{\tilde{\Phi} \left(\mathbb{S}^1\times \left(\delta_h, \frac{1}{2N}-\delta_e \right)\right)}  \theta(t)\psi\,dx+ \int_{\tilde{\Phi} \left(\mathbb{S}^1\times \left(\frac{1}{2N}-\delta_e, \frac{1}{2N}\right)\right)}   \theta(t)\psi\,dx
\Bigg)\nonumber\\
=&\sup_{\|\psi\|_{H^1}\leq 1} (I_1+I_2+I_3).\label{EST_mix_norm} 
\end{align}}
We first estimate $I_2$ in the region away from  elliptic and hyperbolic points. This yields a global mixing rate that is independent of the distribution of the initial data $\theta_0$.
With the help of Lemma A2  and \eqref{2EST_r}, we obtain that 
\begin{align}
\sup_{\|\psi\|_{H^1}\leq 1} I_2 &=\sup_{\|\psi\|_{H^1}\leq 1} \int_{\tilde{\Phi} \left(\mathbb{S}^1\times \left(\delta_h, \frac{1}{2N}-\delta_e \right)\right)}  \theta(t)\psi\,dx\nonumber\\
&\leq  \|\theta_0\|_{H^1} (1+\text{Lip}(\tilde{\Phi}))^2 r_N(t)\nonumber\\
& \leq C\|\theta_0\|_{H^1} (1+C(N\delta)^{-1})^2 |\ln (N  \delta_e)|^2 \frac{1}{N^4t},
\label{EST_I2}
\end{align}
where  we used that $\text{Lip}(\tilde{\Phi}_N)\leq C(N \delta_h)^{-1}$ (see \cite[Lem. 4.5]{brue2024enhanced}).

To estimate $I_1$ and $I_2$ for mixing in the regions near the hyperbolic and elliptic points, respectively,
we consider some special  initial data accordingly. First, applying 
$\|\theta(t)\|_{L^\infty}= \|\theta_0\|_{L^\infty}$ for any $t>0$ (see \eqref{theta_Lp} with $p=\infty$)
and the two-dimensional Sobolev embedding $\|\psi\|_{L^p}\leq C(p)\|\psi\|_{H^1}\leq C(p)$ for any $2< p<\infty$, we get
 \begin{align}
\sup_{\|\psi\|_{H^1}\leq 1} I_1=&\int_{\tilde{\Phi} \left(\mathbb{S}^1\times \left(0, \delta_h\right)\right)} \theta(t)\psi\,dx \leq \|\theta_0\|_{L^\infty} \int_{\tilde{\Phi} \left(\mathbb{S}^1\times \left(0, \delta_h\right)\right)} |\psi |\,dx \nonumber\\
 \leq& C(\epsilon) \|\theta_0\|_{L^\infty}\|\psi\|_{L^{\frac{1}{\epsilon}}}
\left[ \mathcal{L}^2\left( \tilde{\Phi} \left(\mathbb{S}^1\times (0, \delta_h)\right)  \cap \tilde{\Omega}\right)\right]^{1-\epsilon}\nonumber\\
\leq& C(\epsilon) \|\theta_0\|_{L^\infty}\left[ \mathcal{L}^2\left( \tilde{\Phi} \left(\mathbb{S}^1\times (0, \delta_h)\right)  \cap \tilde{\Omega} \right)\right]^{1-\epsilon},
\label{EST_I1}
\end{align}
for any $\epsilon>0$. 
Similarly,  we have
\begin{align}
\sup_{\|\psi\|_{H^1}\leq 1} I_3 \leq  C(\epsilon) \|\theta_0\|_{L^\infty}\left[ \mathcal{L}^2\left( \tilde{\Phi} \left(\mathbb{S}^1\times \left(\frac{1}{2N}-\delta_e, \frac{1}{2N}\right)\right) \cap \tilde{\Omega}\right)\right]^{1-\epsilon}. 
\label{EST_I3}
\end{align}
As shown in the arguments for \cite[(4.18)--(4.19), p.\,84]{brue2024enhanced}, 
one can always choose an appropriate initial datum $\theta_0$, along with a suitable subdomain $\tilde{\Omega}\subset \Omega$, such that
 $\tilde{\Phi}(\mathbb{S}^1\times (0, \delta))\cap \tilde{\Omega}=\emptyset$ and $\tilde{\Phi}(\mathbb{S}^1\times \left(\frac{1}{2N}-\delta_e, \frac{1}{2N}\right) )\cap \tilde{\Omega}=\tilde{\Phi}(\mathbb{S}^1\times \left(\frac{1}{2N}-\delta_e, \frac{1}{2N}\right))$. Specifically,  $\theta_0$ can be chosen to have support in a small neighborhood of the elliptic point $P_e = (\frac{1}{2N}, \frac{1}{2N})$, say, within a ball $\mathcal{O}_{\xi}(P_e)$ of sufficiently small radius $\frac{1}{2N}\gg \xi>0$, while remaining bounded away from the hyperbolic point. 
In this case, $\delta_h=\delta_h(\xi)$ depends only on $\xi$. As a result, we have
    \begin{align}
\sup_{\|\psi\|_{H^1}\leq 1} I_3\leq&  C(\epsilon) \|\theta_0\|_{L^\infty}\left[ \mathcal{L}^2\left( \tilde{\Phi} \left(\mathbb{S}^1\times \left(\frac{1}{2N}-\delta_e, \frac{1}{2N}\right)\right) \right)\right]^{1-\epsilon}\nonumber\\
\leq & C(\epsilon) \|\theta_0\|_{L^\infty} (\delta_e)^{2-2\epsilon}, 
\label{2EST_I3}
\end{align}
where we used that $\mathcal{L}^2\left( \tilde{\Phi} \left(\mathbb{S}^1\times \left(\frac{1}{2N}-\delta_e, \frac{1}{2N}\right)\right) \right)\leq C(\delta_e)^2 $ for some constant $C>0$.

Consequently,  for given $\delta_h>0$,  combining  \eqref{EST_mix_norm} with \eqref{EST_I2} and \eqref{EST_I3} gives 
\begin{align}
    \|\theta(t)\|_{(H^1(\Omega))'}=& N^2
\sup_{\|\psi\|_{H^1}\leq 1} (I_2+I_3)\nonumber\\
\leq &C\|\theta_0\|_{H^1} (1+C(N\delta_h)^{-1})^2 |\ln (N \pi \delta_e)|^2 \frac{1}{N^2t}+C(\xi) \|\theta_0\|_{L^\infty} N^2(\delta_e)^{2-2\epsilon}.
\label{2EST_mix_norm} 
\end{align}
To have the inequality hold for any $t>0$, optimizing $\delta_e$ for the  infimum of the right hand side of \eqref{2EST_mix_norm} gives 
\begin{align}
    \|\theta(t)\|_{(H^1(\Omega))'}
    & \leq  C(\epsilon, \xi, \|\theta_0\|_{H^1}, \|\theta_0\|_{L^\infty} )
     \frac{1}{(N^2t)^{1-\epsilon} },
\label{3EST_mix_norm} 
\end{align}
where $\epsilon>0$ is arbitrarily small.

Analogous analysis can be used to show that if $\text{supp}(\theta_0)$ 
is near the  hyperbolic point, using  the volume estimate for 
$\mathcal{L}^2\left( \tilde{\Phi} \left(\mathbb{S}^1\times (0, \delta_h)\right)\right)$ we get 
 \begin{align*}
\sup_{\|\psi\|_{H^1}\leq 1} I_1
\leq C(\epsilon) \|\theta_0\|_{L^\infty} \left (\frac{4\delta_h}{N\pi}\left(1+\ln \frac{4}{N\pi\delta_h}\right)\right)^{1-\epsilon}.
\end{align*}
For given $\delta_e>0$, applying the same  optimization scheme in $\delta_h$ follows that  
\begin{align}
    \|\theta(t)\|_{(H^1(\Omega))'}&=N^2\sup_{\|\psi\|_{H^1}\leq 1} (I_1+I_2)\leq C(\epsilon, \xi, \|\theta_0\|_{H^1}, \|\theta_0\|_{L^\infty} ) 
\frac{1}{(N^{2}t)^{\frac{1}{3}-\epsilon}}.
\label{4EST_mix_norm} 
\end{align}

Finally, combining \eqref{EST_mix_norm} with \eqref{3EST_mix_norm}--\eqref{4EST_mix_norm} establishes  the upper bound of $\|\theta\|_{(H^1(\Omega))'}$  as follows
\begin{align*}
    &\|\theta(t)\|_{(H^1(\Omega))'}\leq 
    C(\epsilon, \xi, \|\theta_0\|_{H^1}, \|\theta_0\|_{L^\infty} ) \frac{1}{(N^{2}t)^{\frac{1}{3}-\epsilon}}, 
\end{align*}
where  $\epsilon, \xi>0$ are small constants. It is clear that the upper bound can be lowered by increasing $N$, and this completes the proof of \eqref{1EST_mixingrate}.
\hfill$\Box$

\subsection*{A3. Hamiltonian $H_N=\frac{1}{N\pi}\sin(N\pi x _1)\sin(N\pi x_2)$}
The proof of part 2) in Proposition  \ref{prop1} for the scaled Hamiltonian $H_N=\frac{1}{N\pi}\sin(N\pi x _1)\sin(N\pi x_2)$ follows the same procedure as that of part 1). In this case,  let $\tilde{T}_N(I)$ be the period of the closed orbit  $\{H_N = \frac{1}{N\pi}\sin (N\pi I) \}$ 
passing through the point $(\frac{1}{2N}, I )$ for $I\in (0, \frac{1}{2N})$. Then it coincides with $\{\sin(N\pi x _1)\sin(N\pi x_2) = \sin (N\pi I) \}$. However, $\tilde{T}_N(I)$ becomes 
\begin{align}
 \tilde{T}_N ( I )
= \frac{4}{N\pi}K(\cos (N\pi I)).\label{1TN_new}
\end{align}
One can show that  $r_N$ in \eqref{EST_rN} now satisfies 
    \begin{align}
 r_N(t)
    &\leq C|\ln (N \delta_e)|^2\frac{1}{N^{7/2}t},\label{EST_r_new}
\end{align}
and the volume estimates in $I_1$ and $I_3$ remain the same. Using the analogous optimization schemes  establishes \eqref{2EST_mixingrate}. This completes the proof  of Proposition  \ref{prop1}.

 \section*{Acknowledgement}
 W. Hu is partially supported by NSF DMS-2111486, DMS-2205117 and AFOSR FA9550-23-1-0675. This work was also  under the support of Humboldt Research Fellowship for Experienced Researchers program from the Alexander von Humboldt Foundation, during W. Hu's visit at the Chair for Dynamics, Control, Machine Learning and Numerics, Friedrich-Alexander-Universit\" at Erlangen N\"urnberg, Germany.

\bibliographystyle{IEEEtran} 
\bibliography{myref}

\end{document}